\newtheorem{theorem}{Theorem}[section]
\newtheorem{lemma}[theorem]{Lemma}
\title{Optimal Convergence Rate for Exact Policy Mirror Descent in Discounted Markov Decision Processes}
\author{%
  Emmeran Johnson \\
  Department of Mathematics\\
  Imperial College London\\
  \texttt{emmeran.johnson17@imperial.ac.uk} \\
  \And
  Ciara Pike-Burke \\
  Department of Mathematics\\
  Imperial College London\\
  \texttt{c.pike-burke@imperial.ac.uk} \\
  \AND
  Patrick Rebeschini \\
  Department of Statistics \\
  University of Oxford \\
  \texttt{patrick.rebeschini@stats.ox.ac.uk} \\
}
\begin{document}

\maketitle

\begin{abstract}
    Policy Mirror Descent (PMD) is a general family of algorithms that covers a wide range of novel and fundamental methods in reinforcement learning. Motivated by the instability of policy iteration (PI) with inexact policy evaluation, PMD algorithmically regularises the policy improvement step of PI. With exact policy evaluation, PI is known to converge linearly with a rate given by the discount factor $\gamma$ of a Markov Decision Process. In this work, we bridge the gap between PI and PMD with exact policy evaluation and show that the dimension-free $\gamma$-rate of PI can be achieved by the general family of unregularised PMD algorithms under an adaptive step-size. We show that both the rate and step-size are unimprovable for PMD: we provide matching lower bounds that demonstrate that the $\gamma$-rate is optimal for PMD methods as well as PI, and that the adaptive step-size is necessary for PMD to achieve it. Our work is the first to relate PMD to rate-optimality and step-size necessity. Our study of the convergence of PMD avoids the use of the performance difference lemma, which leads to a direct analysis of independent interest. We also extend the analysis to the inexact setting and establish the first dimension-optimal sample complexity for unregularised PMD under a generative model, improving upon the best-known result.
\end{abstract}

\section{Introduction}

The problem of finding an optimal policy in tabular discounted Markov Decision Processes (MDPs) was classically solved using dynamic programming approaches such as policy iteration (PI) and value iteration (VI) \citep{Puterman2008MarkovProgramming, Sutton1998ReinforcementIntroduction}. These methods are well understood theoretically and are guaranteed to converge linearly to the optimal policy in the tabular setting with a rate equal to the discount factor $\gamma$ of the MDP \citep{Bellman1966DynamicProgramming}. Recently, increased interest has been devoted to the study of policy-gradient (PG) approaches based on optimising a parameterised policy with respect to an objective \citep{Sutton2000PolicyApproximation, Konda_PG, Kakade2002AGradient}. 

Given their popularity, it is of interest to better understand PG methods and determine if their guarantees match those of classical algorithms in tabular MDPs. Among the recent works focused on understanding these methods in the tabular setting, \cite{Xiao2022OnMethods} study a general family of algorithms known as Policy Mirror Descent (PMD). PMD algorithmically regularises the policy improvement step of PI and as such can be seen as a version of regularised PI, without actually regularising the objective of interest. It is also viewed as a policy-gradient method through its connection to mirror descent \citep{Beck2003MirrorOptimization}. Linear convergence of PMD was established by \cite{Xiao2022OnMethods}, though their rate depends on an instance-dependent factor that can scale with the dimension of the problem, such as the size of the state space. For a specific instance of PMD known as Natural Policy Gradient (NPG), \cite{Khodadadian2021OnAlgorithm} showed that an instance-independent $\gamma$-rate is achievable, although their results do not cover general PMD. In MDPs where the objective is regularised, the $\gamma$-rate has been established for PMD \citep{Cen2021FastRegularization, Lan2021PolicyClasses, Zhan2021PolicyConvergence}. The classical approaches (PI and VI) achieve the $\gamma$-rate without regularisation, revealing that regularisation is, in general, not necessary for algorithms to reach the $\gamma$-rate. This motivates the following questions: 


\textit{Can the classical linear $\gamma$-rate be matched by unregularised policy-gradient algorithms? And what is the best rate that unregularised policy-gradient methods can achieve ?}

\noindent For PMD, our work answers the first question positively and answers the second by establishing that the $\gamma$-rate is in fact the best rate achievable for PMD as well as for a more general family of algorithms (see Section \ref{sec:Optimality_of_PMD}). PMD allows for a choice of a mirror map that specifies different algorithms. Among these, NPG and PI are two ubiquitous instances of PMD each corresponding to their own mirror map. However, PMD is much more general and other mirror maps will lead to alternative algorithms endowed with the guarantees of PMD that we establish in this paper. In particular, the correspondence of mirror maps with exponential families \citep{Banerjee_BregDiv_ExpFam_Connection} allows us to specify a wealth of valid mirror maps. This illustrates that PMD is a general framework that encompasses a wide range of novel but also fundamental algorithms, and motivates the study of its convergence guarantees. In this work, we make the following contributions and summarise them in Table \ref{tab:Result_comparison},
\begin{itemize}
    \item We recover the $\gamma$-rate for the general family of PMD algorithms with exact policy evaluations under an adaptive size (see the third bullet point below). In particular, Theorem \ref{Thm:MAIN_RESULT} establishes the following bound in $\ell_\infty$-norm for the value $V^{\pi^k}$ of the policy $\pi^k$ after $k$ iterations of PMD compared to the value $V^{\pi^\star}$ of an optimal policy $\pi^\star$,
    \begin{align*}
        \| V^{\pi^\star} - V^{\pi^k}\|_\infty \leq \frac{2}{1-\gamma} \gamma^k,
    \end{align*}
    providing guarantees for any starting-state distribution. This matches the rate of VI and PI as well as the best known rates for PMD on regularised MDPs. This is also the first fully dimension-independent linear convergence result for unregularised PMD, by which we mean that there is no dependence on the size of the state space or the action space.
    \item We provide a matching non-asymptotic lower-bound in Theorem \ref{Thm:LB_RESULT}, establishing the $\gamma$-rate as the optimal rate for PMD methods in early iterations. Our results show that a particular choice of learning rate allows PMD to reach this lower-bound exactly. Note that an asymptotic lower-bound is not possible due to the exact convergence of PI in finite iterations (see Section \ref{sec:Optimality_of_PMD} for a discussion of the significance of this non-asymptotic lower-bound).
    \item The $\gamma$-rate for PMD in Theorem \ref{Thm:MAIN_RESULT} relies on an adaptive step-size, where the adaptivity comes from the fact that the step-size depends on the policy at the current iteration (see Section \ref{sec:MainResult}). In Theorem \ref{Thm:StepSizeNecessity} we show that this adaptivity is necessary for PMD to achieve the $\gamma$-rate, establishing our step-size as both sufficient and necessary. 
    \item We establish a novel theoretical analysis that avoids the use of the performance difference lemma \citep{Kakade2002ApproximatelyLearning}. This leads to a simple analysis and avoids needing to deal with visitation distribution mismatches that lead to dimension dependence in prior work.
    \item By extending our analysis to the inexact setting, with an approach similar to that of in \cite{Xiao2022OnMethods}, we establish an instance-independent sample complexity of $\Tilde{O}(|\mathcal{S}| |\mathcal{A}|(1-\gamma)^{-8} \varepsilon^{-2})$ under a generative model,
    where the notation $\Tilde{O} ()$ hides poly-logarithmic factors, $\mathcal{S}$ is the state space of the MDP, $\mathcal{A}$ is the action space and $\varepsilon$ is the required accuracy. This improves on the previous best known sample complexity for PMD by removing the dependence on a distribution mismatch coefficient that can scale with problem-dependent quantities such as the size of the state space. More generally, we highlight that the analysis we establish in the exact setting can easily be combined with any other scheme for estimating the Q functions (see Section \ref{sec:InexactSetting}), paving the way for further improvements in instance-independent sample complexity results should more efficient estimation procedures be developed.
\end{itemize}

Our contributions are primarily on establishing the optimal rate for general (not just NPG) \textit{exact} PMD where we assume access to the true action-values of policies, 
and the simplicity of the analysis. The sample complexity result in the inexact setting illustrates how our analysis can be easily extended to obtain improved results for inexact PMD. 
These contributions advance the theoretical understanding of PMD and rule out searching for instances of PMD that may improve on PI or NPG in the exact setting. The algorithmic differences between instances of PMD do not affect the performance of the algorithm beyond the step-size condition.

\begin{table}[t]
\centering
    \caption{Comparison of contributions with prior work that study PMD. \cite{Khodadadian2021OnAlgorithm} focus on NPG, an instance of PMD for a specific mirror map (see Section \ref{sec:Prelims}). Their analysis is fundamentally different to ours as it exploits the closed-form update of NPG. Their step-size is similar to ours, although it has a dependence on a sub-optimality gap (see Section \ref{sec:MainResult}). Linear $\gamma$-rate holds if the linear convergence is with the $\gamma$-rate, not only linear convergence. The $\ell_\infty$-bound is satisfied if it holds for $\|V^{\pi^\star} - V^{\pi^k}\|_\infty$. Dimension independence is satisfied when there is no instance for which the bound can scale with the size of the state or action space. We compare these works in more detail in Section \ref{sec:MainResult}.}
\begin{tabular}{ccccccc}\toprule
    & Linear & General & $\ell_\infty$  & Dimension & Matching & Step-Size \\
     & $\gamma$-Rate & Mirror Map  & Bound & Independent & Lower-Bound & Necessity \\\midrule
    \cite{Khodadadian2021OnAlgorithm} & $\checkmark$ & $\times$ & $\checkmark$ & $\checkmark$ & $\times$ & $\times$ \\
    \cite{Xiao2022OnMethods} & $\times$ & $\checkmark$ & $\times$ & $\times$ & $\times$ & $\times$ \\
    This work & $\checkmark$ & $\checkmark$ & $\checkmark$ & $\checkmark$ & $\checkmark$ & $\checkmark$ \\ \bottomrule
\end{tabular}
\label{tab:Result_comparison}
\end{table}

\section{Related work}\label{sec:Related_Work}

\subsection{Convergence rates for exact policy mirror descent}

We first consider the setting where exact policy evaluation is assumed. In this setting, several earlier works have sub-linear convergence results for PMD \citep{Geist2019AProcesses, Shani2020AdaptiveMDPs} and NPG specifically \citep{Agarwal2021OnShift, politex}, though these have since been improved to linear convergence results as discussed below. Note that the work of \citep{politex} refers to their algorithm as "Politex", which shares the same update as NPG.

A line of work has considered PG methods applied to regularised MDPs. In this setting, linear convergence has been established for NPG with entropy regularisation \citep{Cen2021FastRegularization}, PMD with strongly-convex regularisers \citep{Lan2021PolicyClasses} and PMD with convex non-smooth regularisers \citep{Zhan2021PolicyConvergence}. The rates of convergence are either exactly $\gamma$ or can be made arbitrarily close to $\gamma$ by letting the step-size go to infinity.

In the setting of unregularised MDPs, which is the focus of this paper, linear convergence of the special case of NPG was established \citep{Bhandari2020AMethods, Khodadadian2021OnAlgorithm} under an adaptive step-size similar to ours that depends on the current policy at each step. The bound of \cite{Bhandari2020AMethods} has an additive asymptotic error-floor that can be made arbitrarily small by making the step-size larger, while for a similar step-size \cite{Khodadadian2021OnAlgorithm} does not have this term so we focus on this work. Their analysis relies on a link between NPG and PI and consists of bounding the difference in value between iterates of both methods. \cite{Bhandari2020AMethods} also establish linear convergence for a number of algorithms including PMD, although it is in the idealised setting of choosing the step size at each iteration that leads to the largest increase in value. This step-size choice will make PMD at least as good as PI since arbitrarily large step-sizes can be chosen and PMD with an infinite step-size converges to a PI update. Since PI converges linearly, so will PMD. This does not establish linear convergence of PMD for step-sizes with a closed-form expression. However, linear convergence for unregularised general PMD was recently established by \cite{Xiao2022OnMethods} under a geometrically increasing step-size. In general, their rate is instance-dependent and may scale with problem dependent quantities such as the size of the state space. For general starting-state distributions, this same instance-dependent rate was established by \cite{HPMD} for a variant of PMD which augments the update with an added regularisation term. We focus our comparison on the work of \cite{Xiao2022OnMethods} rather than this work as the guarantees are equivalent in both but \cite{HPMD} do not directly study PMD and have a more complicated algorithm. A summary of our results compared to those of \cite{Khodadadian2021OnAlgorithm} and \cite{Xiao2022OnMethods} is presented in Table \ref{tab:Result_comparison} and discussed in more detail in Section \ref{sec:MainResult}. In terms of optimality, \cite{Khodadadian2021OnAlgorithm} provide a lower-bound for constant step-size NPG, though it only applies to MDPs with a single-state, which can be solved in a single iteration with exact policy evaluation as the step-size goes to infinity (for which the lower-bound goes to 0). We provide a lower-bound in Theorem \ref{Thm:LB_RESULT} that applies to PMD with arbitrary step-size on an MDP with any finite state space. To the best of our knowledge, prior to this work no lower-bound has been established in this general setting.

\subsection{Sample complexity of inexact policy mirror descent}

Sample complexity in the inexact policy evaluation setting refers to the number of samples needed to guarantee an $\varepsilon$-optimal policy is returned when we no longer have access to the Q-values exactly. We here give an outline of results, typically established in high-probability, under a generative model that we formally present in Section \ref{sec:InexactSetting}. The lower bound on the sample complexity in this setting was shown to be of $\Tilde{\Omega}\Big(\frac{|\mathcal{S}| |\mathcal{A}|}{(1-\gamma)^3\varepsilon^2}\Big)$ by \cite{AzarSampComp}. This lower-bound can be reached by model-based approches \citep{Microsoft2019Model-BasedOptimal, Li_BreakingSampleBarrier} and model-free approaches \citep{Sidford_VRQL, WainVarRedQOptimal}.

The sample-complexity for PG methods has been recently studied in \citep{Yuan_VanillaPG}. Under a generative model, some works have considered PMD or NPG under various types of regularisation \citep{Cen2021FastRegularization, Lan2021PolicyClasses, Zhan2021PolicyConvergence}. We focus on unregularised methods, for which results for PMD or its instances on tabular MDPs under a generative model are limited. There are works that obtain sample complexity results for NPG \citep{Agarwal2021OnShift, Liu_VRNPG} and for PMD \citep{Shani2020AdaptiveMDPs} though they do not attain the optimal $\varepsilon$-dependence of $\mathcal{O}(\varepsilon^{-2})$. \cite{Lazaric_CPI} show that a variant of PI, a special case of PMD, achieves the optimal $\varepsilon$-dependence of $\mathcal{O}(\varepsilon^{-2})$. More recently, \cite{Xiao2022OnMethods} show that the general family of PMD methods match the $\mathcal{O}(\varepsilon^{-2})$ sample complexity with a factor of $(1-\gamma)^{-8}$. Our result for the inexact setting shares the same dependence on $\varepsilon$ and $1-\gamma$ as \cite{Xiao2022OnMethods} but removes an instance-dependent quantity which can depend on the size of the state space. Further comparison to the result in \citep{Xiao2022OnMethods} is given in Section \ref{sec:InexactSetting}. Beyond tabular MDPs and generative models, \cite{Chen_NPG_linearfctapprox_samplecomp} study NPG under linear function approximation and off-policy sampling, though their results imply worse sample complexities when restricted to tabular MDPs under a generative model. PMD under linear function approximation \citep{yuan2023linear, Alfano2022-tg} and general function approximation \citep{Alfano2023-vv} have also been studied and results similar to \cite{Xiao2022OnMethods} were obtained in those settings.

\section{Preliminaries}\label{sec:Prelims}

A Markov Decision Process (MDP) is a discrete-time stochastic process, comprised of a set of states $\mathcal{S}$, a set of actions $\mathcal{A}$, a discount factor $\gamma \in [0,1)$ and for each state-action pair $(s,a)$ a next-state transition function $p(\cdot|s,a) \in \Delta(\mathcal{S})$ and a (assumed here deterministic) reward function $r(s,a) \in [0,1]$. $\Delta(\mathcal{X})$ denotes the probability simplex over a set $\mathcal{X}$. We consider both $\mathcal{S}$ and $\mathcal{A}$ to be finite, which is known as the tabular setting. In a state $s$, an agent chooses an action $a$, which gives them a reward $r(s,a)$ and transitions them to a new state according to the transition function $p(\cdot | s, a)$. Once they are in a new state, the process continues. The actions chosen by an agent are formalised through policies. A policy $\pi: \mathcal{S} \rightarrow \Delta(\mathcal{A})$ is a mapping from a state to a distribution over actions. We will often write it as an element in $\Pi = \Delta(\mathcal{A})^{|\mathcal{S}|}$. In each state $s \in \mathcal{S}$, an agent following policy $\pi$ chooses an action $a \in \mathcal{A}$ according to $\pi_s = \pi(\cdot | s) \in \Delta(\mathcal{A})$.

In this work, the goal is to learn how to behave in a $\gamma$-discounted infinite-horizon MDP. We measure the performance of a policy with respect to the value function $V^\pi: \mathcal{S} \rightarrow \mathbb{R}$,
\begin{equation*}
    V^\pi(s) = \mathbb{E}\Big[ \sum\limits_{t = 0}^\infty \gamma^t r(s_t, a_t) | \pi, s_0 = s \Big],
\end{equation*}
where $s_t, a_t$ are the state and action in time-step $t$ and the expectation is with respect to both the randomness in the transitions and the choice of actions under policy $\pi$. This is a notion of long-term reward that describes the discounted rewards accumulated over future time-steps when following policy $\pi$ and starting in state $s$. For a distribution over starting states $\rho \in \Delta(\mathcal{S})$, we write $V^{\pi}(\rho) = \sum_{s\in \mathcal{S}} \rho(s) V^{\pi}(s)$ for the expected value when starting in a state distributed according to $\rho$. It is also useful to work with the state-action value $Q^\pi: \mathcal{S} \times \mathcal{A} \rightarrow \mathbb{R}$: 
\begin{equation*}
    Q^\pi(s,a) = \mathbb{E}\Big[ \sum\limits_{t = 0}^\infty \gamma^t r(s_t, a_t) | \pi, s_0 = s, a_0 = a \Big],
\end{equation*}
which is similar to $V^\pi$, with the additional constraint of taking action $a$ in the first time-step. We will often write $V^\pi \in \mathbb{R}^{|\mathcal{S}|}$ (resp. $Q^\pi \in \mathbb{R}^{|\mathcal{S}|\times|\mathcal{A}|}$) to refer to the vector form, where each entry represents the value (resp. action-value) in that state (resp. state-action pair). Similarly, we write $Q^\pi_s \in \mathbb{R}^{|\mathcal{A}|}$ for the vector of action-values in state $s$. The following expressions, which relate $Q^\pi$ and $V^\pi$ in terms of each other and when combined give the Bellman equations \citep{Bellman1966DynamicProgramming}, follow from their definitions above,
\begin{equation*}
    V^\pi(s) = \langle Q^\pi_s, \pi_s \rangle, \quad Q^\pi(s,a) = r(s,a) + \gamma \sum_{s' \in \mathcal{S}}p(s'|s,a) V^\pi(s').
\end{equation*}
We now define the discounted visitation-distribution for starting state $s'$ and policy $\pi$,
\begin{align}\label{eq:visitation-distrib}
    d^\pi_{s'}(s) = (1 - \gamma) \sum\limits_{t = 0}^\infty \gamma^t \mathbb{P}^\pi (s_t = s | s_0 = s'),
\end{align}
which plays an important part in the study of PG methods. Note that $\mathbb{P}^\pi (s_t = s | s_0 = s')$ is the probability of being in state $s$ at time $t$ when starting in state $s'$ and following policy $\pi$. We also write $d^\pi_\rho(s) = \sum_{s' \in \mathcal{S}} \rho(s') d^\pi_{s'}(s)$.

One of the main aims of reinforcement learning is to find a policy $\pi$ that maximises $V^\pi$. It is known that there exists a deterministic policy that simultaneously maximises $V^\pi$ and $Q^\pi$ for all states and actions \citep{Bellman1966DynamicProgramming}. We call such a policy an optimal policy and denote it by $\pi^\star$. We are interested in finding an $\varepsilon$-optimal policy, i.e a policy $\pi$ such that $\|V^{\pi^\star} - V^\pi\|_\infty < \varepsilon$.

\subsection{Exact policy mirror descent}\label{sec:PMD}

We are interested in PG methods that are based on optimising a parameterised policy $\pi_\theta$ with respect to $V^{\pi_\theta}(\rho)$ for some $\rho \in \Delta(\mathcal{S})$. In the tabular setting, we can use the direct parameterisation of a policy $\pi_\theta$, which associates a parameter to each state-action pair, i.e. we have $\pi_\theta(a|s) = \theta_{s,a}$. We will drop the subscript $\theta$ for notational convenience. The gradient of the value function with respect to this parameterisation \citep{Sutton2000PolicyApproximation} is given for each state-action pair (s,a) by
\begin{equation}\label{eq:value_grad}
    \frac{\partial }{\partial \pi(a|s)} V^\pi(\rho) = \frac{1}{1-\gamma} d^\pi_\rho(s) Q^\pi(s,a).
\end{equation}

Mirror Descent (MD, \cite{Beck2003MirrorOptimization}) carries out gradient descent in a geometry that is non-Euclidean. Using $-V^\pi(\rho)$ as the minimising objective, the proximal perspective of MD gives an update of the form
\begin{equation}\label{eq:MD_update}
    \pi^{k+1} = \text{argmin}_{p \in \Pi} \Big\{ - \eta_k \langle \nabla V^{\pi^k}(\rho), p \rangle  + D_h(p, \pi^k) \Big\}
\end{equation}
where $h:\text{dom } h \rightarrow \mathbb{R}$ is the mirror map (with $\Pi \subset \text{dom } h$) and $D_h$ is the Bregman divergence generated by $h$. We require $h$ to be of Legendre type \citep{DBLP:books/degruyter/Rockafellar70}, i.e strictly convex and essentially smooth (differentiable and $\|\nabla h(x_k)\| \rightarrow \infty$ for any sequence $x_k$ converging to a point on the boundary of $\text{dom } h$) on the relative interior (rint) of $\text{dom } h$. The Bregman Divergence is defined as
\begin{equation*}
    D_h(\pi, \pi') = h(\pi) - h(\pi') - \langle \nabla h(\pi'), \pi - \pi' \rangle \quad \quad \text{for } \pi, \pi' \in \text{dom } h.
\end{equation*}
As the objective $V^\pi(\rho)$ is non-convex in general \citep{Agarwal2021OnShift}, usual techniques from convex theory \citep{Bubeck2015ConvexComplexity} are not applicable.

The presence of the visitation-distribution term $d^\pi_\rho(s)$ in the gradient of the objective in (\ref{eq:value_grad}) can slow down learning because it can lead to vanishingly small gradients when states are infrequently visited under the current policy $\pi$ \citep{Agarwal2021OnShift}. To circumvent this issue, Policy Mirror Descent (PMD) \citep{Lan2021PolicyClasses, Shani2020AdaptiveMDPs, Xiao2022OnMethods} applies a variant of update (\ref{eq:MD_update}) with a weighted Bregman divergence $D^{\text{PMD}}_h$ that matches the visitation distribution factors of the gradient $D^{\text{PMD}}_h(p, \pi^k) = \sum_s d^{\pi^k}_\rho(s) D_h(p_s, \pi^k_s)$ where the mirror map $h$ is now defined on a subset of $\mathbb{R}^{|\mathcal{A}|}$. The resulting update has for all states a factor of $d^{\pi^k}_\rho(s)$ in both terms. The minimisation can then be applied for each state individually to get the PMD update
\begin{equation}\label{eq:PMD_update}
    \pi^{k+1}_s = \text{argmin}_{p \in \Delta(\mathcal{A}) } \Big\{  - \eta_k \langle Q^{\pi^k}_s, p \rangle  + D_h(p, \pi^k_s) \Big\}
\end{equation}
for all states $s$. We will often add a superscript $k$ to any quantity that is associated to $\pi^k$. For example, $V^k(s) = V^{\pi^k}(s)$. Similarly for $\pi^\star$ and the superscript $\star$. Exact PMD iteratively applies update (\ref{eq:PMD_update}) for some sequence of step-sizes $\eta_k > 0$ and initial policy $\pi^0 \in \text{rint } \Pi$.
We call this algorithm exact because we assume access to the true state-action values $Q^k$.

The update (\ref{eq:PMD_update}) of PMD considered in this work uses the true action-value $Q^\pi$. In prior work, PMD has sometimes been applied to regularised MDPs \citep{Lan2021PolicyClasses} where the action-value is augmented with regularisation and is no longer the true action-value. This is a different algorithm that converges to a policy that is optimal in the regularised MDP but not in the original unregularised MDP.


PMD is a general family that covers many algorithms, specified by the choice of mirror map $h$. These will inherit the guarantees of PMD, which motivates the study of the convergence guarantees of PMD beyond specific instances. Taking $h$ to be the negative entropy yields NPG, whose theoretical properties have attracted a lot of interest \citep{Agarwal2021OnShift, Cen2021FastRegularization, Khodadadian2021OnAlgorithm}. With a null Bregman Divergence, PMD recovers PI. PI is generated by a constant mirror map, which is not of Legendre type but the analysis still applies so all results on PMD remain valid for PI. In fact, PMD can be viewed as a form of regularised PI since the update (\ref{eq:PMD_update}) converges to a PI update as $\eta_k \rightarrow \infty$, regardless of the mirror map. Beyond these, providing mirror maps that generate other Bregman Divergences will lead to different algorithms. In particular, every exponential family has a corresponding mirror map generating a unique Bregman Divergence \citep{Banerjee_BregDiv_ExpFam_Connection}, highlighting the generality of PMD.

\section{Main results for exact policy mirror descent}\label{sec:MainResult}

In this section, we present our main results on the convergence of exact PMD. We first introduce some relevant notation. Fix a state $s \in \mathcal{S}$ and an integer $k \geq 0$. Let $\mathcal{A}^k_s = \{ a \in \mathcal{A}: Q^k(s,a) = \text{max}_{a' \in \mathcal{A}} Q^k(s,a') \}$ denote the set of optimal actions in state $s$ under policy $\pi^k$. Denote by $\widetilde{\Pi}^{k+1}_s$ the set of greedy policies w.r.t $Q^k_s$ in state s, i.e $\widetilde{\Pi}^{k+1}_s = \Big\{ p \in \Delta(\mathcal{A}) : \sum_{a \in \mathcal{A}^k_s} p(a) = 1 \Big\}$.
We are now ready to state our main result in the setting of exact PMD, which is proved in Section \ref{sec:PROOF}.

\begin{theorem}\label{Thm:MAIN_RESULT}
    Let $\{c_k\}_{k\in \mathbb{Z}_{\geq 0}}$ be a sequence of positive reals. Consider applying iterative updates of (\ref{eq:PMD_update}) with $\pi^0 \in \text{rint } \Pi$ and step-sizes satisfying for all $k \geq 0$,
    \begin{align}\label{eq:STEP_SIZE}
        \eta_k \geq  \frac{1}{c_k} \max_{s \in \mathcal{S}} \Big\{ \min_{\widetilde{\pi}^{k+1}_s \in \widetilde{\Pi}^{k+1}_s} D_h(\widetilde{\pi}^{k+1}_s , \pi^k_s) \Big\}.
    \end{align}
    Then we have for all $k \geq 0$,
    \begin{equation}\label{eq:Theorem_Convergence_Bound}
        \|V^\star - V^k\|_\infty \leq \gamma^k \Big( \|V^\star - V^0\|_\infty + \sum_{i=1}^{k} \gamma^{-i} c_{i-1} \Big).
    \end{equation}
\end{theorem}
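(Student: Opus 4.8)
The plan is to reduce the theorem to a single one-step contraction inequality,
$\|V^\star - V^{k+1}\|_\infty \leq \gamma \|V^\star - V^k\|_\infty + c_k$, and then unroll it. Unrolling from $k$ down to $0$ gives $\|V^\star - V^k\|_\infty \leq \gamma^k \|V^\star - V^0\|_\infty + \sum_{i=0}^{k-1}\gamma^{k-1-i} c_i$, and reindexing the sum with $j=i+1$ produces exactly the bound \eqref{eq:Theorem_Convergence_Bound}. All the content therefore lies in the one-step inequality, which I would prove by working directly with the Bellman operators rather than the performance difference lemma. Throughout I write $(T^\star V^k)(s) = \max_{a} Q^k(s,a)$ for the Bellman optimality operator and $(T^{\pi}V^k)(s) = \langle Q^k_s, \pi_s\rangle$ for the operator of a fixed policy $\pi$, both of which are consistent with the Bellman equations in Section \ref{sec:Prelims}; recall $T^\star$ is a $\gamma$-contraction in $\ell_\infty$ and each $T^\pi$ is monotone with fixed point $V^\pi$.

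The first step extracts information from the update \eqref{eq:PMD_update} by competing against a \emph{greedy} policy $\widetilde{\pi}^{k+1}_s \in \widetilde{\Pi}^{k+1}_s$ rather than against $\pi^\star$. Since $\pi^{k+1}_s$ minimises the objective in \eqref{eq:PMD_update}, substituting the suboptimal competitor $u=\widetilde{\pi}^{k+1}_s$ and discarding the nonnegative term $D_h(\pi^{k+1}_s,\pi^k_s)$ yields $\eta_k \langle Q^k_s, \widetilde{\pi}^{k+1}_s - \pi^{k+1}_s\rangle \leq D_h(\widetilde{\pi}^{k+1}_s,\pi^k_s)$. As $\widetilde{\pi}^{k+1}_s$ is greedy, $\langle Q^k_s, \widetilde{\pi}^{k+1}_s\rangle = \max_a Q^k(s,a)$; minimising over greedy policies and invoking the step-size condition \eqref{eq:STEP_SIZE} (where the maximum over states in the denominator guarantees that each per-state Bregman penalty is dominated by $c_k$) gives $\max_a Q^k(s,a) - \langle Q^k_s,\pi^{k+1}_s\rangle \leq c_k$ for every $s$, i.e. $T^\star V^k - T^{\pi^{k+1}} V^k \leq c_k \mathbf{1}$ componentwise.

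The second step is monotonic improvement, $V^{k+1} \geq T^{\pi^{k+1}} V^k$. Taking instead $u=\pi^k_s$ in the same optimality inequality gives $\eta_k\langle Q^k_s, \pi^{k+1}_s - \pi^k_s\rangle \geq D_h(\pi^{k+1}_s,\pi^k_s)\geq 0$, so $(T^{\pi^{k+1}}V^k)(s) = \langle Q^k_s,\pi^{k+1}_s\rangle \geq \langle Q^k_s,\pi^k_s\rangle = V^k(s)$, that is $T^{\pi^{k+1}}V^k \geq V^k$; iterating the monotone operator $T^{\pi^{k+1}}$ from $V^k$ toward its fixed point $V^{k+1}$ gives $V^{k+1}\geq T^{\pi^{k+1}}V^k$. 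Combining with $V^\star = T^\star V^\star$, I would then write $V^\star - V^{k+1} \leq V^\star - T^{\pi^{k+1}}V^k = (T^\star V^\star - T^\star V^k) + (T^\star V^k - T^{\pi^{k+1}}V^k)$, bound the first bracket by $\gamma\|V^\star-V^k\|_\infty\mathbf{1}$ using the $\ell_\infty$-contraction of $T^\star$, and the second by $c_k\mathbf{1}$ from the first step; since $V^\star - V^{k+1}\geq 0$, taking the $\ell_\infty$ norm delivers the one-step inequality.

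The main obstacle, and the idea that makes the bound dimension-free, is the choice to compete against the greedy policy $\widetilde{\pi}^{k+1}_s$ rather than $\pi^\star$: this is exactly what routes the argument through the Bellman optimality operator and its $\ell_\infty$-contraction, sidestepping the visitation-distribution mismatch terms that the performance difference lemma would otherwise introduce. The rest is careful bookkeeping: verifying that the maximum over states in \eqref{eq:STEP_SIZE} bounds each state's Bregman penalty uniformly by $c_k$, confirming that the monotone iteration of $T^{\pi^{k+1}}$ converges to $V^{k+1}$, and checking the geometric reindexing when unrolling the recursion.
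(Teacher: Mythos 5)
Your proposal is correct and follows essentially the same route as the paper's own proof: you compete against a greedy policy in the optimality/three-point inequality to obtain $\max_a Q^k(s,a) - \langle Q^k_s,\pi^{k+1}_s\rangle \le c_k$ under the step-size condition, use the one-step error propagation (the $\gamma$-contraction of the Bellman optimality operator, which is exactly the paper's bound $\|Q^\star_s - Q^k_s\|_\infty \le \gamma\|V^\star - V^k\|_\infty$), and invoke the monotone-improvement fact $V^{k+1}\ge T^{\pi^{k+1}}V^k$, i.e.\ $\langle Q^k_s,\pi^{k+1}_s\rangle \le V^{k+1}(s)$, which is the content of Lemma \ref{lemma:Q_DsctPropPMD}. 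Your Bellman-operator packaging, and your monotone-iteration derivation of $V^{k+1}\ge T^{\pi^{k+1}}V^k$, are only cosmetic variants of what appears in Section \ref{sec:PROOF} and Appendix \ref{sec:Proof_DsctPropPMD}.
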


The sequence $\{c_k\}_{k\in \mathbb{Z}_{\geq 0}}$ plays an important role in both the step-size constraint (\ref{eq:STEP_SIZE}) and the bound (\ref{eq:Theorem_Convergence_Bound}). In particular, different choices will lead to different guarantees. We focus on $c_i = \gamma^{2(i+1)} c_0$ for some $c_0 > 0$, giving a step-size with a geometrically increasing component. The resulting bound is
\begin{align*}
    \|V^\star - V^k\|_\infty \leq \gamma^k \Big( \|V^\star - V^0\|_\infty + \frac{c_0}{1-\gamma} \Big),
\end{align*}
which converges linearly with the $\gamma$-rate, and matches the bounds of PI and VI as $c_0$ goes to 0. PMD cannot do better as we will show in Theorem \ref{Thm:LB_RESULT}. We discuss other choices of $\{c_k\}$ in Appendix \ref{sec:ck_in_SS_and_UBs}. We provide some simulations that demonstrate the behaviour of our step-size (\ref{eq:STEP_SIZE}) and validate its benefits compared to the step-size of \citep{Xiao2022OnMethods} in Appendix \ref{sec:simulations}.

\noindent \textbf{Comparison to \cite{Xiao2022OnMethods}:} Linear convergence of unregularised PMD was first established by \cite{Xiao2022OnMethods} under a geometrically increasing step-size $\eta_k = \eta_0/\gamma^k$. We discuss this step-size further and show the necessity of adaptivity to achieve the $\gamma$-rate in Section \ref{sec:SS_necessity}. Their rate of convergence is $1 - \frac{1}{\theta_\rho}$ where $\theta_\rho$ is an instance-dependent term defined as follows
\begin{align*}
    \theta_\rho = \frac{1}{1-\gamma} \Big\| \frac{d^\star_\rho}{\rho} \Big\|_\infty,
\end{align*}
where $d^\star_\rho$ is the visitation distribution defined in (\ref{eq:visitation-distrib}) under an optimal policy and $\rho$ is the starting-state distribution to which the bound applies, i.e the bound is on $V^\star(\rho) - V^k(\rho)$. This $\theta_\rho$ is at best $\gamma$ when we use $\rho$ to be the stationary distribution of the optimal policy. However, in this case, the guarantee only applies to states on the support of this stationary distribution and provides no guarantees for other states. In general, it is unclear how $\theta_\rho$ may scale in a specific MDP. In particular, it is possible to construct an MDP where $\theta_\rho$ scales linearly with the size of the state space $|\mathcal{S}|$ (Appendix \ref{sec:MDP_Xiao_Bad}). Though this MDP is somewhat trivial, it nonetheless illustrates how $\theta_\rho$ can easily be large leading to slow rates of convergence. It is also not straightforward to obtain convergence in individual states from the bound in \cite{Xiao2022OnMethods} due to the presence of $\rho$ in the denominator of the mismatch coefficient in $\theta_\rho$. In contrast, we obtain the optimal $\gamma$-rate of convergence and our result holds in $\ell_\infty$-norm over all states so avoids having to deal with a starting-state distribution $\rho$ altogether. 

This distribution mismatch coefficient commonly appears in convergence bounds in the literature \citep{Kakade2002ApproximatelyLearning, Scherrer_API, Bhandari2019-lg, Shani2020AdaptiveMDPs, Agarwal2021OnShift}, both under exact and inexact policy evaluation. For many of these papers mentioned, removing it would be of great interest though often does not appear possible. Our results show that it is removable for the general family of PMD algorithms to obtain dimension-free linear convergence. The techniques we use may be of interest for removing this coefficient in other settings.

\noindent \textbf{Comparison to \cite{Khodadadian2021OnAlgorithm}:} The $\gamma$-rate was established by \cite{Khodadadian2021OnAlgorithm} for NPG, a specific instance of PMD for which the Bregman Divergence is the KL-divergence. The bound shown in their work is similar to the one implied by our result with $c_i = \gamma^{2(i+1)} c_0$. Defining $\Delta^k(s) = \text{max}_{a\in \mathcal{A}} Q^k(s,a) - \text{max}_{a \notin \mathcal{A}^k_s} Q^k(s,a)$, the minimal sub-optimality gap in state $s$ under $\pi^k$, then the step-size corresponding to their bound with the KL as Bregman Divergence is
\begin{align*}
    \eta_k \geq \text{max}_{s, \widetilde{\pi}^{k+1}_s \in \widetilde{\Pi}^{k+1}_s} \Big\{ \Big( L_k + \text{log}|\mathcal{A}| + D(\widetilde{\pi}^{k+1}_s, \pi^k_s) \Big) \frac{1}{\Delta_k(s)} \Big)\Big\},
\end{align*}
where $L_k = Lk$ for some constant $L>0$. This highlights the connection with our step-size condition (\ref{eq:STEP_SIZE}). In particular, they both have an adaptive component that depends linearly on the Bregman divergence between the current policy and the greedy policy and a non-adaptive component on which the bound depends. An important difference is that our step-size is independent of the sub-optimality gap $\Delta_k(s)$, and will be robust to situations where this gap is small. We can construct a general family of MDPs for which we can make $\Delta_k(s)$ arbitrarily small and the step-size of \cite{Khodadadian2021OnAlgorithm} will correspondingly become arbitrarily large (Appendix \ref{sec:MDP_Khodad_Bad}). Despite the apparent similarities with our results, their analysis is significantly different to ours as it exploits the specific closed-form update of NPG to bound the difference in value with an update of PI. Our analysis applies to PMD for a general mirror map (not just NPG) and as such does not utilize specific properties of the mirror map and does not require the analytic solution of the update to be known. Our analysis also easily extends to inexact PMD (see Section \ref{sec:InexactSetting}), which theirs does not.


\paragraph{Computing the step-size:} The expression (\ref{eq:STEP_SIZE}) for the step-size can be simplified. The minimum over $\widetilde{\pi}^{k+1}_s \in \widetilde{\Pi}^{k+1}_s$ can be removed by taking any $\widetilde{\pi}^{k+1}_s \in \widetilde{\Pi}^{k+1}_s$. It was stated with the minimum to have the smallest condition but that is not necessary. In fact, there will often only be one greedy action, i.e. $\mathcal{A}^k_s$ will contain a single action and there will just be one policy in $\widetilde{\Pi}^{k+1}_s$. As for the maximum over $s \in \mathcal{S}$, this condition is imposed because we are using the same step-size in all states for simplicity. If we allow different step-sizes in each state, then the step-size in state $s$, denoted $\eta_k(s)$ would just have to satisfy (choosing any $\widetilde{\pi}^{k+1}_s \in \widetilde{\Pi}^{k+1}_s$)
\begin{align*}
        \eta_k(s) \geq  \frac{1}{c_k} D_h(\widetilde{\pi}^{k+1}_s , \pi^k_s).
\end{align*}
The computational complexity of the step-size is then that of the Bregman divergence between two policies. Note that the step-size (\ref{eq:STEP_SIZE}) is always finite. It can be unbounded when $h$ is of Legendre type on the relative interior of $\Delta(\mathcal{A})$ (as for the negative entropy) but then all iterates will belong to the relative interior of $\Delta(\mathcal{A})$ and the Bregman divergences are well defined so the step-size is finite.

\subsection{Optimality of PMD}\label{sec:Optimality_of_PMD}

We have established in Theorem \ref{Thm:MAIN_RESULT} that PMD achieves a linear $\gamma$-rate. The following result shows that this rate is in fact optimal in a worst-case sense. The proof can be found in Appendix \ref{sec:Proof_LB_RESULT}.

\begin{theorem}\label{Thm:LB_RESULT}
    Fix $n > 0$. There exists a class of MDPs with $|\mathcal{S}| = 2n+1$ and a policy $\pi^0 \in \text{rint } \Pi$ such that running iterative updates of (\ref{eq:PMD_update}) for any positive step-size regime, we have for $k < n$:
        \begin{align}\label{eq:LB_PMD}
        \|V^\star - V^k\|_\infty \geq \frac{1}{2}\gamma^{k}\|V^\star - V^0\|_\infty.
    \end{align}
\end{theorem}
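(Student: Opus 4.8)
The statement has to hold for \emph{every} positive step-size regime, so the argument cannot exploit any particular $\eta_k$. The lever that makes this possible is a step-size-independent property of the update (\ref{eq:PMD_update}): if in some state $s$ the action-value vector $Q^k_s$ is constant across actions, then $\langle Q^k_s, p \rangle$ is the same for all $p \in \Delta(\mathcal{A})$, the linear term in (\ref{eq:PMD_update}) plays no role, and the update reduces to minimising $D_h(p, \pi^k_s)$, whose unique minimiser is $p = \pi^k_s$. Hence a \emph{flat} action-value profile freezes the policy in that state, $\pi^{k+1}_s = \pi^k_s$, irrespective of $\eta_k > 0$ and of the mirror map $h$. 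I would build an MDP in which, for the first $n$ iterations, this flatness keeps the policy pinned at $\pi^0$ on a large block of states, so that no step-size can rescue PMD.

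\textbf{The construction.} I would use a chain of ``decision'' states $x_1, \dots, x_n$, with $x_n$ adjacent to a rewarding terminal state $g$ and $x_1$ furthest from it, each equipped with a forward action leading to the next chain state and a defection action leading to an absorbing trap $y_i$; together with the $n$ traps and the terminal state this gives $|\mathcal{S}| = 2n+1$. The initial policy $\pi^0$ is taken in $\text{rint }\Pi$, mixing the two actions in every $x_i$, and the constant fraction of mass it wastes on the traps is what ultimately produces the factor $\tfrac12$. The rewards and the trap values are calibrated so that under $\pi^0$ the forward and defection actions are exactly tied in every $x_i$ except $x_n$, whose forward action already points at the high-value terminal state and is therefore strictly preferred.

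\textbf{The induction and the gap.} The core is an induction on $k$ showing that, throughout the first $n$ iterations, flatness keeps the policy pinned to $\pi^0$ on an initial segment of the chain that recedes by exactly one state per iteration: after $k$ updates $\pi^k = \pi^0$ on $x_1, \dots, x_{n-k}$. The base case is the initialisation. For the inductive step I would argue that the only state whose action-values can cease to be tied at iteration $k$ is the current frontier, because its successor has just been optimised, while the trap values are set so that the value increases propagated further up the chain under $\pi^0$ stay too small to break any remaining tie. Given the invariant, for every $k < n$ there is a frozen frontier state still played according to $\pi^0$, and I would lower bound $\|V^\star - V^k\|_\infty$ by its sub-optimality: since it sits a factor $\Theta(\gamma^{k})$ in value below the already-optimised goal region and $\pi^0$ leaks a constant fraction of its mass into the trap, the residual sub-optimality there is a fixed fraction of $\gamma^{k}\|V^\star - V^0\|_\infty$, tuned to be at least $\tfrac12$.

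\textbf{Main obstacle.} The delicate part is the quantitative calibration of the trap values, which must simultaneously (i) tie the two actions in every $x_i$, $i < n$, at initialisation, (ii) be large enough that the frontier advances by \emph{at most} one state per iteration even under the fastest dynamics, i.e.\ the infinite-step-size (policy-iteration) regime in which the frontier value jumps straight to optimal, thereby preventing premature unfreezing of upstream states, and (iii) be small enough that the frontier does unfreeze on schedule and that the surviving gap still dominates $\tfrac12 \gamma^{k} \|V^\star - V^0\|_\infty$. Threading a single choice of trap values through these competing constraints uniformly in $k < n$ is where the real work lies; smaller step-sizes only slow the dynamics further and are handled a fortiori once the infinite-step-size case is controlled.
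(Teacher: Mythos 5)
There is a genuine gap, and it sits exactly at the mechanism you chose as your lever. You propose to freeze the policy on the not-yet-reached part of the chain by arranging \emph{exact ties} $Q^k(x_i,\text{fwd}) = Q^k(x_i,\text{def})$, so that the linear term in (\ref{eq:PMD_update}) is constant and the update returns $\pi^k_s$. The problem is that a tie is destroyed by \emph{any} strictly positive perturbation, and the dynamics produce one everywhere at once. After the first iteration the policy at $x_n$ improves, so $V^1(x_n) > V^0(x_n)$; since $\pi^0 \in \text{rint}\,\Pi$ puts strictly positive mass on the forward action in every state, the Bellman equation propagates this strict increase to $V^1(x_{n-1}), V^1(x_{n-2}), \dots$ in a single step (damped by $\gamma\,\pi^0(\text{fwd}|\cdot)$ per link, but never to zero), while the trap values are unchanged. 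Hence $Q^1(x_i,\text{fwd}) > Q^1(x_i,\text{def})$ for \emph{all} $i$ simultaneously, not just at the frontier. Your claim that the propagated increases ``stay too small to break any remaining tie'' cannot be made to work: there is no nonzero increase small enough to preserve an equality. Since the theorem must hold for every positive step-size regime, including the $\eta_k \to \infty$ (policy iteration) limit, the update at iteration $2$ is then greedy in every chain state and the algorithm converges in two iterations, so your construction does not yield the $\gamma^k$ lower bound for $k < n$.

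The paper's construction uses the same chain-with-traps topology but a different, robust mechanism: the trap action is made \emph{strictly} preferred with a margin $\delta$ (reward $r_i = \gamma^{i+1}+\delta$ versus the optimal continuation worth $\gamma^i/(1-\gamma)$ one step away), and the initial policy puts only a tiny mass $\alpha \le \delta(1-\gamma)$ on the good action. The descent property $\langle Q^k_s, \pi^{k+1}_s - \pi^k_s\rangle \ge 0$ (Lemma \ref{lemma:DsctPropPMD}) then guarantees that, with two actions, the mass on the good action cannot increase in any state where the trap is still strictly preferred; an induction shows the strict preference persists in $s_i$ for $i > k+1$ because the value leakage up the chain is damped by the factor $\alpha$ and is absorbed by the margin $\delta$. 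In short: replace ``exact tie, policy frozen'' by ``strict wrong preference with a $\delta$ margin, mass on the right action bounded by $\alpha \le \delta(1-\gamma)$,'' and the one-state-per-iteration unlocking you want actually holds for arbitrary step-sizes. Your final step (reading off the gap at the frontier state and tuning constants to get the factor $\tfrac12$) then goes through essentially as you describe.
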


A key feature of this result is that the bound holds for $k < n$. For a fixed iteration budget, Theorem \ref{Thm:LB_RESULT} implies that there exists an MDP on which PMD will not do better than the linear $\gamma$-rate for any step-size. The $\gamma$-rate for PMD that we prove in Theorem~\ref{Thm:MAIN_RESULT} is optimal in this sense.

\paragraph{Lower-bound beyond $\mathbf{k < n}$?} For a fixed state-space size $|\mathcal{S}| = 2n+1$, it is not known what lower-bound holds when $k \geq n$. However, PI is an instance of PMD (see Section \ref{sec:PMD}) and thus a lower bound that scales with $\gamma^k$ for all $k>0$ cannot hold since PI converges exactly in finite-iterations (in fact with a number of iterations that scales linearly with $|\mathcal{S}|$ \citep[Theorem 3]{Scherrer_PI_UB}). However, an asymptotic lower-bound for PMD under arbitrary step-size will not hold even if we exclude PI from the class of considered algorithms. As the step-size tends to infinity, any PMD update recovers a PI update. This implies that general PMD can be arbitrarily close to PI's exact convergence for the same finite number of iterations. Thus, any lower-bound on the convergence of PMD must be limited to finite iterations. Since the finite iteration convergence of PI only scales linearly with $|\mathcal{S}|$ \citep[Theorem 3]{Scherrer_PI_UB}, the number of iterations guaranteed by Theorem \ref{Thm:LB_RESULT} has the same dependence on $|\mathcal{S}|$ as the number of iterations needed for exact convergence of PI. To the best of our knowledge, this lower bound on the value convergence of PMD scaling with $\gamma^k$ is new. We expect this result may have been known for the special case of PI, though we could not find a proof of it in the literature. The works that establish a lower bound for PI do so in the setting of exact convergence to the optimal policy \citep{Hollanders_PI_LB1}, not $\varepsilon$-accurate convergence, and for undiscounted MDPs \citep{Fearnley_PI_LB2}.

\paragraph{Rate-Optimality} Theorem \ref{Thm:LB_RESULT} establishes that the $\gamma$-rate is optimal in the first $|\mathcal{S}|/2$ iterations. This non-asymptotic optimality is justified by the discussion above, which highlights that a rate for PMD under arbitrary step-size can only be optimal for a finite number of iterations, and specifically up until the exact convergence of PI.

There are some results on the super-linear convergence of NPG in the literature, though these apply once you have a policy within some neighbourhood of the optimal policy or value. \cite{Cen2021FastRegularization} establish such a result for NPG in the regularised case, and \cite{Khodadadian2021OnAlgorithm} in the unregularised case under additional conditions. Theorem \ref{Thm:LB_RESULT} does not contradict this latter result as for the MDP considered in the proof, the super-linear convergence would kick-in for iterations beyond the $k<n$ considered here.

\paragraph{Lower-bound for general PG methods:} The lower bound of Theorem \ref{Thm:LB_RESULT} in fact applies to any algorithm that at each iteration increases the probability of the current greedy action. The greedy action is the action with highest action-value for the current policy. This covers algorithms more general than PMD and in particular, includes the vanilla PG algorithm.

\subsection{Adaptive step-size necessity}\label{sec:SS_necessity}

We have established in Theorem \ref{Thm:MAIN_RESULT} that PMD under an adaptive step-size achieves a linear $\gamma$-rate and in Theorem \ref{Thm:LB_RESULT} that this rate is optimal for PMD. We now show the adaptivity is in fact necessary to achieve the $\gamma$-rate. This strengthens the notion of optimality from the previous section - both the rate and step-size are unimprovable. The proof can be found in Appendix \ref{sec:STEP_SIZE_NECESSITY_PROOF}.

\begin{theorem}\label{Thm:StepSizeNecessity}
    Fix $n > 0$ and $\gamma > 0.2$. There exists an MDP with state-space of size $|\mathcal{S}| = 2n+1$ and a policy $\pi^0 \in \text{rint } \Pi$ such that running iterative updates of NPG (PMD with h as the negative entropy) that satisfy  $\|V^\star - V^k\|_\infty \leq \gamma^{k} (\|V^\star - V^0\|_\infty + \frac{1-\gamma}{8})$ requires 
    \begin{align}
        \eta_{k_i} & \geq KL(\tilde{\pi}^{k_i + 1}_{s_i}, \pi^{k_i}_{s_i}) / 2 \gamma^{k_i}, \qquad \tilde{\pi}^{k_i + 1}_{s_i} \in \widetilde{\Pi}^{k+1}_s
    \end{align}
    for $i = 1, ..., n$ s.t $k_1 < k_2 < ... < k_n$ where $\{ s_1, ..., s_n\}$ are distinct states of the considered MDP. 
\end{theorem}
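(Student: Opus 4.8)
The plan is to prove the claim by combining an explicit MDP construction (adapted from the lower-bound instance behind Theorem \ref{Thm:LB_RESULT}) with an inversion of the closed-form NPG update. The overall logic is a \emph{forcing} argument: I would design a chain MDP in which matching the prescribed $\gamma$-rate bound is possible only if NPG places sufficient probability mass on a distinguished near-optimal action at a distinct ``frontier'' state during each of $n$ successive iterations, and then show that the exponential-weights update can only achieve that mass with a step-size as large as claimed.

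First I would build the MDP on $2n+1$ states as a chain with two quantitative properties, both tuned through the free construction parameters and the hypotheses $\gamma > 0.2$ and the additive slack $\frac{1-\gamma}{8}$: (i) at iteration $k_i$ the global error $\|V^\star - V^k\|_\infty$ is controlled by the sub-optimality at a single frontier state $s_i$, with the frontier advancing by one new state per iteration so that the pairs $(s_i,k_i)$ are distinct and $k_1 < \dots < k_n$; and (ii) at $s_i$ under $\pi^{k_i}$ there is a unique greedy action $a_i^\star$ whose one-step advantage $Q^{k_i}(s_i,a_i^\star) - V^{k_i}(s_i)$ is small, of order $\gamma^{k_i}$, while the probability $\pi^{k_i}(a_i^\star|s_i)$ is small, so that $KL(\widetilde\pi^{k_i+1}_{s_i},\pi^{k_i}_{s_i}) = -\log \pi^{k_i}(a_i^\star|s_i)$ is large. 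The tension between a weak advantage signal and a large required move is exactly what will force a large step-size.

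Second, I would show the $\gamma$-rate bound forces progress: if NPG satisfies $\|V^\star - V^{k_i+1}\|_\infty \leq \gamma^{k_i+1}(\|V^\star - V^0\|_\infty + \frac{1-\gamma}{8})$, then since by (i) the error is governed by $s_i$, the value $V^{k_i+1}(s_i)$ must be close enough to $V^\star(s_i)$, which given the instance's action-values can only happen if $\pi^{k_i+1}(a_i^\star|s_i)$ exceeds a fixed threshold bounded below by a constant. Third, I invert the NPG update $\pi^{k+1}(a|s) \propto \pi^k(a|s)\exp(\eta_k Q^k(s,a))$: Jensen's inequality gives $\log \sum_a \pi^{k_i}(a|s_i)\,e^{\eta_{k_i}Q^{k_i}(s_i,a)} \geq \eta_{k_i} V^{k_i}(s_i)$, hence $\log \pi^{k_i+1}(a_i^\star|s_i) \leq \log \pi^{k_i}(a_i^\star|s_i) + \eta_{k_i}\bigl(Q^{k_i}(s_i,a_i^\star) - V^{k_i}(s_i)\bigr)$. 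Rearranging, then inserting the lower threshold on $\pi^{k_i+1}(a_i^\star|s_i)$, the advantage bound from (ii), and the identity $KL(\widetilde\pi^{k_i+1}_{s_i},\pi^{k_i}_{s_i}) = -\log \pi^{k_i}(a_i^\star|s_i)$, yields $\eta_{k_i} \geq KL(\widetilde\pi^{k_i+1}_{s_i},\pi^{k_i}_{s_i})/2\gamma^{k_i}$; the hypotheses $\gamma > 0.2$ and the slack $\frac{1-\gamma}{8}$ are precisely what let the additive $\log(\text{threshold})$ term be absorbed so that the denominator is exactly $2\gamma^{k_i}$, using that the large $KL$ dominates it.

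The main obstacle is the MDP construction and the frontier argument (the first two steps): I need an instance in which the advantage of the optimal action at the frontier decays like $\gamma^{k_i}$ (so a strong step is needed to move probability), while the value gap stays large enough to force that move, and crucially where these two phenomena occur at $n$ distinct states and iterations simultaneously. Tracking how the frontier advances exactly one state per iteration under NPG, verifying the advantage remains of order $\gamma^{k_i}$, and confirming that $\pi^{k_i}(a_i^\star|s_i)$ stays small enough to keep the KL large throughout is the delicate bookkeeping; the inversion of the exponential-weights update is then a short and routine computation. I would close by noting that this establishes the necessity of adaptivity, since any step-size lacking the policy-dependent $KL$ factor must fail the bound at some of the $n$ frontier states.
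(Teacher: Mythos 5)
Your proposal follows essentially the same route as the paper's proof: the same chain MDP from the lower bound of Theorem \ref{Thm:LB_RESULT}, the same forcing argument that the $\gamma$-rate bound compels a constant lower threshold on the greedy-action probability at a frontier state which advances through $n$ distinct states at $n$ distinct iterations, and the same inversion of the closed-form NPG update with the $\log(\text{threshold})$ term absorbed into half of the (large) KL term by taking $\pi^0(a_1|\cdot)=\alpha$ small enough. The only cosmetic difference is that you invert the update via Jensen's inequality on the normaliser, which puts the advantage $Q^{k_i}(s_i,a_i^\star)-V^{k_i}(s_i)$ in the denominator, whereas the paper manipulates the two-action ratio directly and obtains the gap $Q^{k_0}(s_k,a_1)-Q^{k_0}(s_k,a_2)$; for this two-action instance these coincide up to the factor $\pi^{k_i}(a_2|s_i)\approx 1$, so both are controlled by the same $O(\gamma^{k_i})$ bound.
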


This theorem states that there are at least $n$ distinct iterations with $n$ distinct states where the step-size has to be bigger than a quantity depending on the Bregman divergence between the current policy and its greedy policy in the considered state in order to achieve a linear $\gamma$-rate. This is precisely the notion of adaptivity that appears in the step-size condition of Theorem \ref{Thm:MAIN_RESULT} and \cite{Khodadadian2021OnAlgorithm}. Theorem \ref{Thm:StepSizeNecessity} shows we cannot improve on this in general and provides justification for using an adaptive step-size instead of the one from \cite{Xiao2022OnMethods}. 
Beyond its necessity, the adaptivity of our step-size can be a strength: it is large when needed, small when not. This is illustrated by the simulations we provide in Appendix \ref{sec:simulations}.

Theorem \ref{Thm:StepSizeNecessity} only holds for the particular case of the negative entropy (NPG). This limitation is due to technical reasons rather than fundamental ones. The proof relies on the closed form update of NPG and we don't have this for general mirror map (see Appendix \ref{sec:STEP_SIZE_NECESSITY_PROOF}). However, this result for NPG is enough to establish the necessity of the step-size for PMD in general.

\section{Sample complexity of inexact policy mirror descent under generative model}\label{sec:InexactSetting}

In the previous sections, we have assumed access to the state-action values $Q^k_s$ to carry out the PMD update. In Inexact PMD (IPMD), we replace $Q^k_s$ with an estimate $\widehat{Q}^k_s$ giving the update 
\begin{equation}\label{eq:IPMD_update}
    \pi^{k+1}_s = \text{argmin}_{p \in \Delta(\mathcal{A}) } \Big\{  - \eta_k \langle \widehat{Q}^k_s, p \rangle  + D_h(p, \pi^k_s) \Big\}.
\end{equation}
Similarly to the exact case, IPMD iteratively applies update (\ref{eq:IPMD_update}) for some sequence of $\eta_k > 0$ and initial policy $\pi^0 \in \text{rint } \Pi$, this time only assuming access to an inexact estimator of $Q^k$.


We consider the setting of a generative model \citep{Kearns_GenModel}, which is a sampling model where we can draw samples from the transition probabilities $p(\cdot | s, a)$ for any pair $(s,a)$. We borrow an estimator common in the literature (see e.g. \cite{Xiao2022OnMethods}, \cite{Lazaric_CPI}): for all state-actions pairs $(s,a)$, draw $M_k$ trajectories of length or horizon $H$, i.e samples of the form $\big( (s_0^{(i)}, a_0^{(i)}), (s_1^{(i)}, a_1^{(i)}), ..., (s_{H-1}^{(i)}, a_{H-1}^{(i)}) \big)_{i = 1, ..., M_k}$, where $a_t^{(i)}$ is drawn from $\pi^k(\cdot | s_t^{(i)})$, $s_{t+1}^{(i)}$ is drawn from $p(\cdot | s_t^{(i)}, a_t^{(i)})$ and $(s_0^{(i)}, a_0^{(i)}) = (s,a)$. Using these samples, we can define a truncated Monte-Carlo estimate of the values as follows, 
\begin{align}\label{eq:Q_estimator}
    \widehat{Q}^k(s,a) = \frac{1}{M_k} \sum_{i=1}^{M_k} \widehat{Q}^k_{(i)}(s,a), \quad \text{where } \quad \widehat{Q}^k_{(i)}(s,a) = \sum_{t = 0}^{H-1} \gamma^t r(s_t^{(i)}, a_t^{(i)}).
\end{align}
We use these $\widehat{Q}^k(s,a)$ to replace $Q^k(s,a)$ in the PMD update step. \cite{Xiao2022OnMethods} present a bound on the accuracy of this estimator which is restated in Appendix \ref{sec:IPMD_Appendix}. Following the same ideas as \cite{Xiao2022OnMethods}, we can extend Theorem \ref{Thm:MAIN_RESULT} to the inexact setting. The following theorem establishes a sample complexity result, which is the sufficient number of calls to the generative model to obtain an $\varepsilon$-optimal policy. For simplicity, we focus on the step-size following from the choice $c_i = \gamma^{2(i+1)}$. 

\begin{theorem}\label{Thm:IPMD_SampleComplexity}
    Consider applying iterative updates of (\ref{eq:IPMD_update}) using the Q-estimator in (\ref{eq:Q_estimator}) given access to a generative model with $\pi^0 \in \text{rint } \Pi$ and step-sizes satisfying for all $k \geq 0$ (with the definitions of $\mathcal{A}^k_s$ and $\widetilde{\Pi}^{k+1}_s$ suitably adjusted with $\widehat{Q}^k_s$ instead of $Q^k_s$),
    \begin{align*}
        \eta_k \geq \max_{s \in \mathcal{S}} \Big\{ \min_{\widetilde{\pi}^{k+1}_s \in \widetilde{\Pi}^{k+1}_s} \frac{D(\widetilde{\pi}^{k+1}_s , \pi^k_s)}{\gamma^{2k+1}} \Big\}.
    \end{align*}
    Fix $\varepsilon>0$. For any $\delta \in (0,1)$, suppose the following are satisfied for all $k\geq0$,
    \begin{equation*}
        K > \frac{1}{1-\gamma} \log \frac{4}{(1-\gamma)\varepsilon}, \quad  H \geq \frac{1}{1-\gamma} \text{log} \frac{16}{(1-\gamma)^3 \varepsilon} \quad \text{and} \quad M_k = M \geq \frac{\gamma^{-2H}}{2} \text{log} \frac{2 K |\mathcal{S}| |\mathcal{A}|}{\delta}.
    \end{equation*}
    Then with probability at least $1 - \delta$, $\|V^\star - V^k\|_\infty \leq \gamma^k \Big( \|V^\star - V^0\|_\infty + \frac{1}{1-\gamma} \Big) + \frac{8}{(1-\gamma)^3} \gamma^H < \varepsilon$. Choosing $K$, $H$ and $M$ to be tight to their lower-bounds, the corresponding sample complexity is $\Tilde{O}\Big(\frac{|\mathcal{S}| |\mathcal{A}|}{(1-\gamma)^8 \varepsilon^2} \Big)$, where the notation $\Tilde{O} ()$ hides poly-logarithmic factors.
\end{theorem}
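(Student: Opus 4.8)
The plan is to mirror the exact analysis behind Theorem \ref{Thm:MAIN_RESULT} while carrying the Q-estimation error through every step, and then to control that error probabilistically. Concretely, I would first prove a deterministic \emph{inexact} counterpart of the one-step contraction underlying Theorem \ref{Thm:MAIN_RESULT}: writing $\epsilon_k = \|\widehat{Q}^k - Q^k\|_\infty$, I expect the same argument that yields $\|V^\star - V^{k+1}\|_\infty \le \gamma\|V^\star - V^k\|_\infty + c_k$ in the exact case to yield, when the update is driven by $\widehat{Q}^k$ and the greedy set $\widetilde{\Pi}^{k+1}_s$ is defined with respect to $\widehat{Q}^k$, a perturbed recursion of the form $\|V^\star - V^{k+1}\|_\infty \le \gamma\|V^\star - V^k\|_\infty + c_k + \tfrac{4}{1-\gamma}\epsilon_k$. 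The $\tfrac{1}{1-\gamma}\epsilon_k$-type contributions enter wherever the exact proof replaces $\widehat{Q}^k$ by $Q^k$ (for instance in the optimality/three-point inequality for the update and in relating $\langle Q^k_s, \cdot\rangle$ to value differences), each such substitution costing at most $\epsilon_k$ in an inner product and at most $\tfrac{\epsilon_k}{1-\gamma}$ after propagating through the Bellman recursion. Unrolling this recursion exactly as in Theorem \ref{Thm:MAIN_RESULT} gives
\[
\|V^\star - V^k\|_\infty \le \gamma^k\Big(\|V^\star - V^0\|_\infty + \textstyle\sum_{i=1}^k \gamma^{-i}c_{i-1}\Big) + \tfrac{4}{1-\gamma}\sum_{i=0}^{k-1}\gamma^{k-1-i}\epsilon_i .
\]

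Second, I would bound each $\epsilon_k$ using the estimator (\ref{eq:Q_estimator}), splitting it into a truncation bias and a statistical fluctuation. Since rewards lie in $[0,1]$, the expectation of a single rollout $\widehat{Q}^k_{(i)}(s,a)$ differs from $Q^k(s,a)$ only by the discarded tail $\sum_{t\ge H}\gamma^t \le \tfrac{\gamma^H}{1-\gamma}$, and each rollout lies in $[0,\tfrac{1-\gamma^H}{1-\gamma}]\subseteq[0,\tfrac{1}{1-\gamma}]$. Hoeffding's inequality over the $M$ i.i.d. rollouts, together with a union bound over the $|\mathcal{S}||\mathcal{A}|$ pairs and the $K$ iterations, shows that the prescribed $M \ge \tfrac{\gamma^{-2H}}{2}\log\tfrac{2K|\mathcal{S}||\mathcal{A}|}{\delta}$ forces the fluctuation to be at most $\tfrac{\gamma^H}{1-\gamma}$ with probability at least $1-\delta$; this is precisely the role of the $\gamma^{-2H}$ factor, which calibrates the statistical error to match the truncation bias rather than the crude range $\tfrac{1}{1-\gamma}$. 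Hence, on this high-probability event, $\epsilon_k \le \tfrac{2\gamma^H}{1-\gamma}$ for all $k$.

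Third, I would combine the two ingredients. Substituting $\epsilon_i \le \tfrac{2\gamma^H}{1-\gamma}$ into the displayed bound and summing the geometric series gives an error floor $\tfrac{4}{1-\gamma}\cdot\tfrac{1}{1-\gamma}\cdot\tfrac{2\gamma^H}{1-\gamma} = \tfrac{8}{(1-\gamma)^3}\gamma^H$, while the choice $c_i=\gamma^{2(i+1)}$ collapses $\sum_{i=1}^k\gamma^{-i}c_{i-1}=\sum_{i=1}^k\gamma^{i}\le\tfrac{1}{1-\gamma}$, yielding the stated bound $\|V^\star - V^k\|_\infty \le \gamma^k(\|V^\star-V^0\|_\infty+\tfrac{1}{1-\gamma}) + \tfrac{8}{(1-\gamma)^3}\gamma^H$. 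The conditions on $K$ and $H$ are then reverse-engineered so that each term is at most $\varepsilon/2$: using $\log(1/\gamma)\ge 1-\gamma$ and $\|V^\star-V^0\|_\infty\le\tfrac{1}{1-\gamma}$, the stated $K$ makes $\gamma^K\cdot\tfrac{2}{1-\gamma}\le\varepsilon/2$ and the stated $H$ makes $\tfrac{8}{(1-\gamma)^3}\gamma^H\le\varepsilon/2$. Finally, the sample complexity is $K\cdot|\mathcal{S}||\mathcal{A}|\cdot M\cdot H$ (iterations times pairs times rollouts of length $H$); substituting $\gamma^{-2H}=\Theta((1-\gamma)^{-6}\varepsilon^{-2})$ from the choice of $H$ and $K,H=\Tilde{O}((1-\gamma)^{-1})$ gives $\Tilde{O}\big(|\mathcal{S}||\mathcal{A}|(1-\gamma)^{-8}\varepsilon^{-2}\big)$.

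The main obstacle is the first step: faithfully re-deriving the exact one-step inequality with $\widehat{Q}^k$ in place of $Q^k$ and confirming that the perturbation constant is exactly $\tfrac{4}{1-\gamma}$ (or whatever constant actually appears), since the greedy set and the adaptive step-size condition must be defined consistently with the estimated $\widehat{Q}^k$ that drives the update. The recursion only closes if the step-size is large enough relative to the Bregman divergence to the $\widehat{Q}^k$-greedy policy, so the step-size in the theorem must be stated in terms of $\widehat{Q}^k$, not $Q^k$. The concentration argument and the $K,H,M$ bookkeeping are routine by comparison; the only delicate points there are ensuring the union bound covers all $K$ iterations and invoking $\log(1/\gamma)\ge 1-\gamma$ in the correct direction.
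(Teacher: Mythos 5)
Your proposal is correct and follows essentially the same route as the paper: a perturbed one-step contraction $\|V^\star - V^{k+1}\|_\infty \le \gamma\|V^\star - V^k\|_\infty + c_k + \tfrac{4\gamma\tau}{1-\gamma}$ (obtained via the inexact analogue of Lemma \ref{lemma:Q_DsctPropPMD}, which supplies exactly the Bellman-propagated $\tfrac{\epsilon_k}{1-\gamma}$ cost you anticipate), combined with the Hoeffding-plus-union-bound estimator guarantee $\tau \le \tfrac{2\gamma^H}{1-\gamma}$ and the same $K,H,M$ bookkeeping. The constants, the $\tfrac{8}{(1-\gamma)^3}\gamma^H$ error floor, and the final $\Tilde{O}(|\mathcal{S}||\mathcal{A}|(1-\gamma)^{-8}\varepsilon^{-2})$ count all match the paper's argument.
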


The proof can be found in Appendix \ref{sec:Proof_SampleComplexity}. The sample complexity established by \cite{Xiao2022OnMethods} (Theorem 16) under a generative model and the same Q-estimator is $\Tilde{O}\Big(\frac{|\mathcal{S}| |\mathcal{A}|}{(1-\gamma)^8 \varepsilon^2} \Big\| \frac{d^\star_\rho}{\rho} \Big\|_\infty^3 \Big)$.

In their work, \cite{Xiao2022OnMethods} stresses the interest in reducing the dependence on both $1/(1-\gamma)$ and the distribution mismatch coefficient in order to scale the PMD guarantees to more relevant settings such as function approximation. Theorem \ref{Thm:IPMD_SampleComplexity} partially resolves this matter by removing the dependence on the distribution mismatch coefficient, which may scale with the size of the state space (Appendix \ref{sec:MDP_Xiao_Bad}). This makes the result dimension-optimal, which is crucial when scaling the results to large or infinite state or action spaces. The dependence on $1/(1-\gamma)$ remains distant from the $1/(1-\gamma)^3$ lower-bound of \cite{AzarSampComp} (see Section \ref{sec:Related_Work}). Whether this can be reached by PMD methods remains open, though using a more suitable Q-estimator than (\ref{eq:Q_estimator}) with our step size regime and analysis, which extends to arbitrary Q-estimators, could bring the sample complexity closer to this.

\section{Analysis}\label{sec:PROOF}

In this section, we present the proof of Theorem \ref{Thm:MAIN_RESULT}. A key component in establishing the $\gamma$-rate is avoiding the performance difference lemma that we state in Appendix \ref{sec:PDL}. In prior works, the quantity that we are looking to bound $V^\star(\rho) - V^k(\rho)$ arises through the performance difference lemma. In particular, \cite{Xiao2022OnMethods} use the lemma on $\mathbb{E}_{s \sim d^\star_\rho} [\langle Q^k_s, \pi^\star_s - \pi^{k+1}_s \rangle]$, which introduces a distribution mismatch coefficient in order to get a recursion. On the other hand, we extract the value sub-optimalities $V^\star(s) - V^k(s)$ and $\| V^\star - V^{k+1} \|_\infty$ directly from $\langle Q^k_s, \pi^\star_s - \pi^{k+1}_s \rangle$ in (\ref{eq:ProofNoPDL}). This leads to an elegant analysis that may be of interest in the study of other methods, and ultimately allows us to remove distribution mismatch factors and obtain an exact $\gamma$-rate. 

\noindent \textbf{Proof of Theorem \ref{Thm:MAIN_RESULT}:} Fix $s \in \mathcal{S}$ and $k \geq 0$. From Lemma \ref{lemma:Q_DsctPropPMD}, we have that $\langle Q^{k}_s, \pi^{k+1}_s \rangle \leq \langle Q^{k+1}_s, \pi^{k+1}_s \rangle = V^{k+1}(s)$. This decouples the dependencies on $\pi^k$ and $\pi^{k+1}$ below and is one of the ingredients that allows us to bypass the performance difference lemma. Using this,
\begin{align}
    \langle Q^{k}_s, \pi^\star_s - \pi^{k+1}_s \rangle \geq \langle Q^{k}_s, \pi^\star_s \rangle - V^{k+1}(s) & = \langle Q^{k}_s - Q^\star_s, \pi^\star_s \rangle +  \langle Q^\star_s, \pi^\star_s \rangle - V^{k+1}(s) \nonumber \\
    & \geq  - \|Q^\star_s - Q^{k}_s\|_\infty + V^\star(s) - V^{k+1}(s), \label{eq:ProofStep1}
\end{align}
where the last step uses Hölder's inequality. Now we use that the difference in state-action values of different policies for the same state-action pair propagates the error to the next time-step, which is discounted by a factor of $\gamma$. Formally, for any state-action pair $(s,a) \in \mathcal{S} \times \mathcal{A}$, 
\begin{align*}
    Q^\star(s,a) - Q^{k}(s,a) & = \gamma \sum_{s'} p(s'|s,a) (V^\star(s') - V^k(s')) \\
    & \leq \gamma \sum_{s'} p(s'|s,a) \|V^\star - V^k\|_\infty = \gamma \|V^\star - V^k\|_\infty,
\end{align*}
which is the same phenomenon that is responsible for the contraction of the Bellman operator. This gives $\|Q^\star_s - Q^{k}_s\|_\infty \leq \gamma \|V^\star - V^k\|_\infty$. Plugging into Equation (\ref{eq:ProofStep1}),
\begin{align*}
     V^\star(s) - V^{k+1}(s) - \gamma \|V^\star - V^k\|_\infty \leq \langle Q^{k}_s, \pi^\star_s - \pi^{k+1}_s \rangle.
\end{align*}
The rest of the proof relies on making the right-hand side of the above arbitrarily small by taking a large enough step size. Choose any greedy policy with respect to $Q^k_s$, $\widetilde{\pi}^{k+1}_s \in \widetilde{\Pi}^{k+1}_s$,
\begin{align}
    V^\star(s) - V^{k+1}(s) - \gamma \|V^\star - V^k\|_\infty & \leq \langle Q^{k}_s, \pi^\star_s - \pi^{k+1}_s \rangle \label{eq:ProofNoPDL} \\
    & \leq \langle Q^{k}_s, \widetilde{\pi}^{k+1}_s  - \pi^{k+1}_s \rangle \label{eq:ProofStep2}
\end{align}
where we use that $\widetilde{\pi}^{k+1}_s$ is greedy with respect to $Q^k_s$. We then apply Lemma \ref{lemma:3ptDsct} or (\ref{eq:3pt_lemma_for_PMD}) to $p = \widetilde{\pi}^{k+1}_s$,
\begin{align*}
    \langle Q^{k}_s, \widetilde{\pi}^{k+1}_s  - \pi^{k+1}_s \rangle \leq \frac{D(\widetilde{\pi}^{k+1}_s , \pi^k_s) - D(\widetilde{\pi}^{k+1}_s , \pi^{k+1}_s) - D(\pi^{k+1}_s , \pi^k_s)}{\eta_k} \leq  D(\widetilde{\pi}^{k+1}_s , \pi^k_s) / \eta_k.
\end{align*}
Combining with (\ref{eq:ProofStep2}) and noting that this holds for any $\widetilde{\pi}^{k+1}_s \in \widetilde{\Pi}^{k+1}_s$, we have
\begin{align*}
    V^\star(s) - V^{k+1}(s) - \gamma \|V^\star - V^k\|_\infty & \leq \frac{1}{\eta_k} \text{min}_{\widetilde{\pi}^{k+1}_s \in \widetilde{\Pi}^{k+1}_s} D(\widetilde{\pi}^{k+1}_s , \pi^k_s) \leq c_k
\end{align*}
from the step-size condition in the statement of the theorem. Rearranging and recalling that $s$ and $k$ were arbitrary, we can choose $s$ where $V^\star(s) - V^{k+1}(s)$ reaches its maximum value. We get
\begin{align*}
    \|V^\star - V^{k+1}\|_\infty & \leq  \gamma \|V^\star - V^k\|_\infty + c_k,
\end{align*}
 and unravelling this recursion completes the proof. $ \hfill\blacksquare$

\section{Conclusion}\label{sec:Conclusion}

In this paper, we have shown that the general family of exact policy mirror descent algorithms in tabular MDPs under an adaptive step-size achieve the same dimension-free linear $\gamma$-rate of convergence of classical algorithms such as policy iteration. We provide matching lower-bounds that establish this rate as optimal for PMD and the adaptive step-size as necessary. We exploit a new approach to study the convergence of PMD, for which avoiding the performance difference lemma is a key element. Though the focus of our work is on the exact policy evaluation setting, the analysis naturally extends to the inexact setting, given access to an estimator of the action-value of a policy. We provide a result for a simple estimator under a generative model that improves upon the best-known sample complexity, although it still does not match the lower bound. Our method is general and applies to any estimator, meaning our result could be improved by a better estimator. Exploiting further algorithmic properties of PMD in the inexact setting may be needed to bridge the gap to the optimal sample complexity, and determine if PMD can match the lower bound in the inexact setting.

\newpage

\acksection

EJ is funded by EPSRC through the Modern Statistics and Statistical Machine Learning (StatML) CDT (grant no. EP/S023151/1).

We would like to thank the reviewers and meta-reviewers for their time and feedback, which led to a better version of this paper.

\bibliography{references}

\newpage
\appendix

\section{Properties of PMD}

We present lemmas relevant to the analysis of PMD. Key to the analysis is the Three-Point Descent Lemma, that relates the improvement of the proximal gradient update compared to an arbitrary point. It originally comes from \cite{Chen1993ConvergenceFunctions} (Lemma 3.2) where a proof can be found, though we use a slightly modified version from \cite{Xiao2022OnMethods}  (Lemma 6).

\begin{lemma}[Three-Point Descent Lemma, Lemma 6 in \cite{Xiao2022OnMethods}]\label{lemma:3ptDsct}
    Suppose that $\mathcal{C} \subset \mathbb{R}^n$ is a closed convex set, $\phi: \mathcal{C} \rightarrow \mathbb{R}$ is a proper, closed convex function, $D_h(\cdot, \cdot)$ is the Bregman divergence generated by a function h of Legendre type and $\text{rint dom}h \cap \mathcal{C} \neq \emptyset$. For any $ x \in \text{rint dom}h$, let
    \begin{equation}\label{eq:prox_grad_step}
        x^+ = \text{argmin}_{u \in C}\{ \phi(u) + D_h(u, x) \}.
    \end{equation}
    Then $x^+ \in \text{rint dom }h \cap C$ and $\forall u \in C$,
    \begin{equation}\label{eq:3PDL_og}
        \phi(x^+) + D_h(x^+, x) \leq \phi(u) + D_h(u, x) - D_h(u, x^+)
    \end{equation}
\end{lemma}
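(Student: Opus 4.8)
The plan is to derive the inequality from the first-order optimality condition for the proximal subproblem, combined with the convexity of $\phi$ and an exact three-point identity for the Bregman divergence. First I would settle the structural claim that $x^+$ is well-defined and lies in $\text{rint dom}\,h \cap \mathcal{C}$. Since $h$ is strictly convex on $\text{rint dom}\,h$, the map $u \mapsto \phi(u) + D_h(u, x)$ is strictly convex, so its minimiser over $\mathcal{C}$ is unique whenever it exists. Existence and interiority are where the Legendre hypotheses enter: essential smoothness (steepness) forces $\|\nabla h(u)\| \to \infty$ as $u$ approaches the boundary of $\text{dom}\,h$, so the objective cannot attain its infimum on that boundary; together with $\text{rint dom}\,h \cap \mathcal{C} \neq \emptyset$ this pins the minimiser in $\text{rint dom}\,h$, where $\nabla h(x^+)$ exists.

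Next I would write the optimality condition. Because $x^+$ minimises the convex function $\phi(u) + D_h(u,x)$ over the convex set $\mathcal{C}$, and $D_h(\cdot,x)$ is differentiable at $x^+$ with $\nabla_u D_h(u,x)\big|_{u=x^+} = \nabla h(x^+) - \nabla h(x)$, there exists a subgradient $g \in \partial \phi(x^+)$ such that
\[
\langle g + \nabla h(x^+) - \nabla h(x),\, u - x^+ \rangle \geq 0 \qquad \text{for all } u \in \mathcal{C}.
\]
I would then invoke the subgradient inequality $\phi(u) - \phi(x^+) \geq \langle g, u - x^+\rangle$ and combine it with the rearranged optimality condition $\langle g, u - x^+\rangle \geq \langle \nabla h(x) - \nabla h(x^+),\, u - x^+\rangle$ to obtain
\[
\phi(u) - \phi(x^+) \geq \langle \nabla h(x) - \nabla h(x^+),\, u - x^+ \rangle.
\]

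The final step is purely algebraic: expanding the definition of $D_h$ gives the three-point identity
\[
\langle \nabla h(x) - \nabla h(x^+),\, u - x^+\rangle = D_h(u, x^+) + D_h(x^+, x) - D_h(u, x),
\]
which I would verify by cancelling the $h(u)$, $h(x^+)$ and $h(x)$ terms after substitution. Plugging this into the previous display and rearranging yields exactly $\phi(x^+) + D_h(x^+, x) \leq \phi(u) + D_h(u, x) - D_h(u, x^+)$, as claimed.

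I expect the main obstacle to be the interiority claim $x^+ \in \text{rint dom}\,h$ rather than the inequality itself: both the optimality condition and the three-point identity implicitly require $\nabla h(x^+)$ to exist, so the argument would be vacuous without first ruling out a boundary minimiser. This is precisely where the essential-smoothness half of the Legendre assumption does the work, and it is the only place where nontrivial convex-analytic machinery is needed; the remainder reduces to a subgradient inequality and an exact identity.
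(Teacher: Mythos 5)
Your proof is correct and follows the standard argument (first-order optimality of $x^+$ over $\mathcal{C}$, the subgradient inequality for $\phi$, and the Bregman three-point identity), which is precisely the route taken in the sources the paper cites for this lemma (Chen and Teboulle, Lemma 3.2; Xiao, Lemma 6) --- the paper itself does not reprove it. Your identification of the interiority claim $x^+ \in \text{rint dom}\,h$ as the only delicate step, handled by the essential-smoothness half of the Legendre assumption, is also accurate.
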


The update (\ref{eq:PMD_update}) of PMD is an instance of the proximal minimisation (\ref{eq:prox_grad_step}) with $\mathcal{C} = \Delta(\mathcal{A})$, $x = \pi^k_s$ and $\phi(x) = - \eta_k \langle Q^{k}_s, x \rangle$. Plugging these into (\ref{eq:3PDL_og}), Lemma \ref{lemma:3ptDsct} relates the decrease in the proximal objective of $\pi^{k+1}_s$ to any other policy, i.e.  $\forall p \in \Delta(\mathcal{A})$,

\begin{equation}\label{eq:3pt_lemma_for_PMD}
    - \eta_k \langle Q^{k}_s, \pi^{k+1}_s \rangle + D_h( \pi^{k+1}_s, \pi^k_s) \leq - \eta_k \langle Q^{k}_s, p \rangle + D_h(p, \pi^k_s) - D_h(p, \pi^{k+1}_s).
\end{equation}

This equation is key to the analysis in Section \ref{sec:PROOF}. In particular, it allows us to prove the following lemma regarding the monotonic improvement in action-value of PMD iterates. This is an extension of Lemma 7 in \cite{Xiao2022OnMethods}.

\begin{lemma}\label{lemma:Q_DsctPropPMD}
    Consider the policies produced by the iterative updates of PMD in (\ref{eq:PMD_update}). Then for any $k \geq 0$,
    \begin{equation*}
        Q^{k+1}(s,a) \geq Q^{k}(s,a), \quad \forall (s,a) \in \mathcal{S} \times \mathcal{A}.
    \end{equation*}
\end{lemma}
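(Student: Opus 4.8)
The plan is to first derive a one-step improvement inequality for the value function from the three-point descent lemma, then bootstrap it into a full componentwise value improvement using the monotonicity of the policy-specific Bellman operator, and finally transfer this to the action-values via the Bellman equation. First I would apply (\ref{eq:3pt_lemma_for_PMD}) at an arbitrary state $s$ with the choice $p = \pi^k_s$. Since $D_h(\pi^k_s, \pi^k_s) = 0$, this gives
\[
    - \eta_k \langle Q^k_s, \pi^{k+1}_s \rangle + D_h(\pi^{k+1}_s, \pi^k_s) \leq - \eta_k \langle Q^k_s, \pi^k_s \rangle - D_h(\pi^k_s, \pi^{k+1}_s).
\]
Rearranging and using $\eta_k > 0$ together with the non-negativity of the two Bregman divergences yields $\langle Q^k_s, \pi^{k+1}_s \rangle \geq \langle Q^k_s, \pi^k_s \rangle = V^k(s)$ for every state $s$, i.e. greedily re-weighting towards $\pi^{k+1}$ does not decrease the one-step lookahead value.

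Next I would recast this inequality in operator form. Using the Bellman equation $Q^k(s,a) = r(s,a) + \gamma \sum_{s'} p(s'|s,a) V^k(s')$, the quantity $\langle Q^k_s, \pi^{k+1}_s \rangle$ equals $(T^{\pi^{k+1}} V^k)(s)$, where $T^{\pi^{k+1}}$ denotes the Bellman evaluation operator associated with $\pi^{k+1}$. The previous inequality then reads $T^{\pi^{k+1}} V^k \geq V^k$ componentwise. Since $T^{\pi^{k+1}}$ is monotone and is a $\gamma$-contraction with unique fixed point $V^{k+1} = V^{\pi^{k+1}}$, iterating preserves the inequality, so $V^k \leq T^{\pi^{k+1}} V^k \leq (T^{\pi^{k+1}})^2 V^k \leq \cdots$, and passing to the limit gives $V^{k+1} \geq V^k$ componentwise.

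Finally, I would substitute this value improvement into the Bellman equation for the action-values: for any $(s,a)$,
\[
    Q^{k+1}(s,a) - Q^k(s,a) = \gamma \sum_{s'} p(s'|s,a)\big(V^{k+1}(s') - V^k(s')\big) \geq 0,
\]
since each difference $V^{k+1}(s') - V^k(s')$ is non-negative and the transition probabilities are non-negative, which is exactly the claim. I expect the main obstacle to be the bootstrapping step: the three-point lemma only yields a single-step lookahead improvement, and upgrading this to an improvement of the infinite-horizon value requires the monotonicity and contraction properties of the Bellman operator rather than direct algebra, with care needed to justify convergence of the iterates to $V^{k+1}$ and preservation of the inequality in the limit.
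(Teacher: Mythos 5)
Your proof is correct and follows essentially the same route as the paper: the three-point descent lemma applied with $p = \pi^k_s$ gives $\langle Q^k_s, \pi^{k+1}_s - \pi^k_s\rangle \geq 0$, this is upgraded to the componentwise value improvement $V^{k+1} \geq V^k$, and the Bellman equation then transfers the improvement to the action-values. The only difference is in the middle step: the paper's primary argument invokes the performance difference lemma there, whereas you use monotonicity and contraction of the evaluation operator $T^{\pi^{k+1}}$ applied to $T^{\pi^{k+1}}V^k \geq V^k$ --- which is precisely the PDL-free alternative the paper itself sketches (the iterated Bellman expansion converging to $V^{k+1}$), so your version is, if anything, more consistent with the paper's stated aim of avoiding the performance difference lemma.
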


\subsection{Proof of Lemma \ref{lemma:Q_DsctPropPMD}}\label{sec:Proof_DsctPropPMD}


We first present Lemma 7 from \cite{Xiao2022OnMethods}, from which Lemma \ref{lemma:Q_DsctPropPMD} almost immediately follows.

\begin{lemma}[Descent Property of PMD, Lemma 7 in \cite{Xiao2022OnMethods}]\label{lemma:DsctPropPMD}
    Consider the policies produced by the iterative updates of PMD in (\ref{eq:PMD_update}). Then for any $k \geq 0$
    \begin{equation*}
        \langle Q^k_s, \pi^{k+1}_s - \pi^k_s \rangle \geq 0, \quad \forall s \in \mathcal{S},
    \end{equation*}
    \begin{equation*}
        V^{k+1}(\rho) \geq V^{k}(\rho), \quad \forall \rho \in \Delta(\mathcal{S}).
    \end{equation*}
\end{lemma}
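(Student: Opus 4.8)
The plan is to establish the two inequalities in turn, deriving the value inequality from the inner-product one. For the pointwise inner-product inequality, I would invoke the specialised three-point inequality (\ref{eq:3pt_lemma_for_PMD}), which is precisely the optimality condition for the PMD update (\ref{eq:PMD_update}) and holds for every $p \in \Delta(\mathcal{A})$. The natural choice is $p = \pi^k_s$, so that $D_h(\pi^k_s, \pi^k_s) = 0$. After substituting and cancelling the $-\eta_k \langle Q^k_s, \pi^k_s \rangle$ and $-\eta_k \langle Q^k_s, \pi^{k+1}_s \rangle$ terms, rearranging isolates $\eta_k \langle Q^k_s, \pi^{k+1}_s - \pi^k_s \rangle$ on one side and the sum $D_h(\pi^{k+1}_s, \pi^k_s) + D_h(\pi^k_s, \pi^{k+1}_s)$ on the other. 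Both Bregman divergences are non-negative by convexity of $h$, and $\eta_k > 0$, so dividing through by $\eta_k$ yields $\langle Q^k_s, \pi^{k+1}_s - \pi^k_s \rangle \geq 0$ for every state $s$. This is the only place the update rule itself enters.

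For the value inequality, I would translate the first inequality into a statement about the policy Bellman operator. Writing $T^{\pi}V(s) = \sum_a \pi(a|s)[r(s,a) + \gamma \sum_{s'} p(s'|s,a) V(s')]$, the Bellman equations give $V^k(s) = \langle Q^k_s, \pi^k_s \rangle = T^{\pi^k}V^k(s)$ and $\langle Q^k_s, \pi^{k+1}_s \rangle = T^{\pi^{k+1}}V^k(s)$, so the first inequality reads $T^{\pi^{k+1}}V^k \geq V^k$ pointwise. Since $T^{\pi^{k+1}}$ has only non-negative coefficients it is monotone, and it is a $\gamma$-contraction in $\ell_\infty$ with unique fixed point $V^{k+1}$. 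Iterating the monotone map from $V^k$ produces the increasing chain $V^k \leq T^{\pi^{k+1}}V^k \leq (T^{\pi^{k+1}})^2 V^k \leq \cdots$, which converges to $V^{k+1}$; hence $V^{k+1} \geq V^k$ pointwise, and averaging against any $\rho \in \Delta(\mathcal{S})$ gives $V^{k+1}(\rho) \geq V^k(\rho)$. Alternatively, one may conclude directly via the performance difference lemma of Appendix \ref{sec:PDL}, since the non-negative weights $d^{k+1}_\rho(s)/(1-\gamma)$ multiply the already-established non-negative inner products; I would favour the monotone-operator route as it keeps the argument self-contained.

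The main obstacle is the second step: the one-step improvement $T^{\pi^{k+1}}V^k \geq V^k$ does not by itself yield $V^{k+1} \geq V^k$, and closing this gap requires the monotone-contraction (policy-improvement) argument above, or equivalently an appeal to the performance difference lemma. Everything else reduces to a direct substitution into the three-point inequality and the elementary non-negativity of Bregman divergences.
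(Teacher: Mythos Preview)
Your proposal is correct and matches the paper's proof essentially line for line: the first inequality is obtained by plugging $p=\pi^k_s$ into the three-point inequality (\ref{eq:3pt_lemma_for_PMD}) and using non-negativity of the Bregman terms together with $\eta_k>0$, and for the second inequality the paper gives both arguments you describe, first the performance difference lemma and then the monotone iteration of $T^{\pi^{k+1}}$ starting from $V^k$. The only cosmetic difference is that the paper leads with the performance difference lemma and offers the Bellman-operator iteration as an alternative, whereas you prefer the reverse ordering.
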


\begin{proof}
    From \cite{Xiao2022OnMethods}. Recall that the Three-Point Descent Lemma states that $\forall p \in \Delta(\mathcal{A})$,
    \begin{equation*}
        - \eta_k \langle Q^{k}_s, \pi^{k+1}_s \rangle + D_h( \pi^{k+1}_s, \pi^k_s) \leq - \eta_k \langle Q^{k}_s, p \rangle + D_h(p, \pi^k_s) - D_h(p, \pi^{k+1}_s).
    \end{equation*}
    Using this with $p = \pi^k_s$,
    \begin{align*}
        D_h(\pi^{k}_s, \pi^{k+1}_s) + D_h(\pi^{k+1}_s, \pi^{k}_s)  \leq \eta_k \langle Q^{k}_s, \pi^{k+1}_s - \pi^k_s \rangle
    \end{align*}
    and since the Bregman divergences are none-negative and $\eta_k > 0$,
    \begin{align*}
        0 \leq  \langle Q^{k}_s, \pi^{k+1}_s - \pi^k_s \rangle
    \end{align*}
    and the result follows by an application of the performance difference lemma (Appendix \ref{sec:PDL})
    \begin{align*}
        V^{k+1}(\rho) - V^{k}(\rho) & = \frac{1}{1-\gamma} \mathbb{E}_{s \sim d^{k+1}_\rho} \Big[ \langle Q^{k}_s, \pi^{k+1}_s - \pi^k_s \rangle \Big] \\
        & \geq 0.
    \end{align*}
    Note that we use the performance difference lemma here because it gives a simple concise proof, but we do not actually need to. To maintain our claim that we avoid the use of the performance difference lemma, we can get the same result without it. We sketch how to do this as follows. From the first part of the lemma, we have
    \begin{align*}
        \langle Q^{k}_s, \pi^{k+1}_s \rangle \geq \langle Q^{k}_s, \pi^k_s \rangle = V^k(s),
    \end{align*}
    in all states $s$. Now note that the left hand side above is
    \begin{align*}
        \langle Q^{k}_s, \pi^{k+1}_s \rangle & = \sum_a \pi^{k+1}(a|s) Q^k(s,a) \\
        & = \sum_a \pi^{k+1}(a|s) \Big(r(s,a) + \gamma \sum_{s'}p(s'|s,a)V^k(s') \Big)
    \end{align*}
    and we can then apply $\langle Q^{k}_{s'}, \pi^{k+1}_{s'} \rangle \geq V^k(s')$ at state $s'$:
    \begin{align*}
        V^k(s) & \leq \sum_a \pi^{k+1}(a|s) \Big(r(s,a) + \gamma \sum_{s'}p(s'|s,a)V^k(s') \Big) \\
        & \leq \sum_a \pi^{k+1}(a|s) \Big(r(s,a) + \gamma \sum_{s'}p(s'|s,a) \sum_{a'} \pi^{k+1}(a'|s') \Big(r(s',a') + \gamma \sum_{s''}p(s''|s',a') V^k(s'') \Big) \Big)
    \end{align*}
    and proceeding iteratively in the limit you get exactly $V^{k+1}(s)$.
\end{proof}

Since Lemma \ref{lemma:DsctPropPMD} holds for any $\rho \in \Delta(\mathcal{S})$, it guarantees that the value in each state is non-decreasing for an update of PMD, i.e for all $s \in \mathcal{S}$,
\begin{align*}
    V^{k+1}(s) - V^{k}(s) \geq 0.
\end{align*}
Using this, we get
\begin{align*}
    Q^{k+1}(s,a) - Q^k(s,a) = \gamma \sum_{s' \in \mathcal{S}} p(s'|s,a) \Big( V^{k+1}(s') - V^k(s') \Big) \geq 0,
\end{align*}
which concludes the proof. $\hfill \blacksquare$

\subsection{Extension of Lemma \ref{lemma:Q_DsctPropPMD} to inexact setting:}

As in the exact case, we first present Lemma 12 from \cite{Xiao2022OnMethods} which is the extension of Lemma \ref{lemma:DsctPropPMD} to the inexact case. We note that in the inexact case, we lose the monotonic increase of values due to the inaccuracy in our estimate $\widehat{Q}^k$ of $Q^k_s$.

\begin{lemma}\label{lemma:Inexact_DsctPropPMD}
    Consider the policies produced by the iterative updates of IPMD in (\ref{eq:IPMD_update}). For any $k \geq 0$, if $\| \widehat{Q}^k_s - Q^k_s\|_\infty \leq \tau$, then
    \begin{equation*}
        \langle \widehat{Q}^k_s, \pi^{k+1}_s - \pi^k_s \rangle \geq 0, \quad \forall s \in \mathcal{S},
    \end{equation*}
    \begin{equation*}
        V^{k+1}(\rho) \geq V^{k}(\rho) -\frac{2\tau}{1-\gamma}, \quad \forall \rho \in \Delta(\mathcal{S}).
    \end{equation*}
\end{lemma}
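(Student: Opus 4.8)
The plan is to mirror the structure of the exact descent property (Lemma~\ref{lemma:DsctPropPMD}), substituting the inexact estimator $\widehat{Q}^k_s$ for $Q^k_s$ wherever the three-point descent lemma is invoked, and then to carefully track the approximation error $\tau$ when passing back to the true value function. For the first inequality, I would observe that the IPMD update (\ref{eq:IPMD_update}) is an instance of the proximal minimisation (\ref{eq:prox_grad_step}) with $\phi(x) = -\eta_k \langle \widehat{Q}^k_s, x\rangle$. Applying Lemma~\ref{lemma:3ptDsct} (equivalently the IPMD analogue of (\ref{eq:3pt_lemma_for_PMD})) with the choice $p = \pi^k_s$, and using that $D_h(\pi^k_s,\pi^k_s) = 0$, yields after rearranging
\begin{align*}
    D_h(\pi^k_s, \pi^{k+1}_s) + D_h(\pi^{k+1}_s, \pi^k_s) \leq \eta_k \langle \widehat{Q}^k_s, \pi^{k+1}_s - \pi^k_s \rangle.
\end{align*}
Since Bregman divergences are non-negative and $\eta_k > 0$, dividing through gives $\langle \widehat{Q}^k_s, \pi^{k+1}_s - \pi^k_s\rangle \geq 0$, which is exactly the first claim. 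This step is essentially identical to the exact case and requires no new ideas.

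For the second inequality, the key is that the descent guarantee we just obtained is stated in terms of the \emph{estimated} action-values $\widehat{Q}^k_s$, whereas the performance difference lemma (Appendix~\ref{sec:PDL}) is stated in terms of the true $Q^k_s$. I would bridge this gap by writing
\begin{align*}
    \langle Q^k_s, \pi^{k+1}_s - \pi^k_s \rangle = \langle \widehat{Q}^k_s, \pi^{k+1}_s - \pi^k_s \rangle + \langle Q^k_s - \widehat{Q}^k_s, \pi^{k+1}_s - \pi^k_s \rangle,
\end{align*}
bounding the first term below by $0$ from the first part, and the second term via Hölder's inequality: $|\langle Q^k_s - \widehat{Q}^k_s, \pi^{k+1}_s - \pi^k_s\rangle| \leq \|Q^k_s - \widehat{Q}^k_s\|_\infty \, \|\pi^{k+1}_s - \pi^k_s\|_1 \leq 2\tau$, where the factor of $2$ comes from both policies lying in the simplex so that $\|\pi^{k+1}_s - \pi^k_s\|_1 \leq 2$. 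This gives the state-wise bound $\langle Q^k_s, \pi^{k+1}_s - \pi^k_s\rangle \geq -2\tau$ uniformly in $s$.

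Finally, I would substitute this into the performance difference lemma,
\begin{align*}
    V^{k+1}(\rho) - V^k(\rho) = \frac{1}{1-\gamma} \mathbb{E}_{s \sim d^{k+1}_\rho}\big[ \langle Q^k_s, \pi^{k+1}_s - \pi^k_s \rangle \big] \geq \frac{1}{1-\gamma}(-2\tau) = -\frac{2\tau}{1-\gamma},
\end{align*}
using that $d^{k+1}_\rho$ is a probability distribution, which completes the proof. The only genuinely non-routine step is the error-accounting in the second paragraph: recognising that the guarantee must be transported from $\widehat{Q}^k_s$ to $Q^k_s$, and that the simplex constraint controls $\|\pi^{k+1}_s - \pi^k_s\|_1$ by $2$, which is what produces the factor $2\tau$ (and after the $\tfrac{1}{1-\gamma}$ weighting, the stated $\tfrac{2\tau}{1-\gamma}$). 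Everything else transfers verbatim from the exact analysis.
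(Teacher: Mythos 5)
Your proposal is correct and follows essentially the same route as the paper: the three-point descent lemma with $p = \pi^k_s$ for the first claim, then the decomposition $\langle Q^k_s, \pi^{k+1}_s - \pi^k_s\rangle = \langle \widehat{Q}^k_s, \cdot\rangle + \langle Q^k_s - \widehat{Q}^k_s, \cdot\rangle$ with Hölder's inequality and the simplex bound $\|\pi^{k+1}_s - \pi^k_s\|_1 \leq 2$, fed into the performance difference lemma. The only cosmetic difference is that you bound the inner product pointwise before taking the expectation over $d^{k+1}_\rho$, whereas the paper performs the same decomposition inside the expectation.
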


\begin{proof}
    From \cite{Xiao2022OnMethods}. The Three-Point Descent Lemma applied to the IPMD update (\ref{eq:IPMD_update}) gives $\forall p \in \Delta(\mathcal{A})$,
    \begin{equation*}
        - \eta_k \langle \widehat{Q}^k_s, \pi^{k+1}_s \rangle + D_h( \pi^{k+1}_s, \pi^k_s) \leq - \eta_k \langle \widehat{Q}^k_s, p \rangle + D_h(p, \pi^k_s) - D_h(p, \pi^{k+1}_s).
    \end{equation*}
    Using this with $p = \pi^k_s$,
    \begin{align*}
        D_h(\pi^{k}_s, \pi^{k+1}_s) + D_h(\pi^{k+1}_s, \pi^{k}_s)  \leq \eta_k \langle \widehat{Q}^k_s, \pi^{k+1}_s - \pi^k_s \rangle
    \end{align*}
    and since the Bregman divergences are none-negative and $\eta_k > 0$,
    \begin{align*}
        0 \leq  \langle \widehat{Q}^k_s, \pi^{k+1}_s - \pi^k_s \rangle,
    \end{align*}
    which proves the first inequality. Now we cannot use the above inequality directly with the performance difference lemma since $\widehat{Q}^k_s$ is not the true action-value. Instead, we have
    \begin{align*}
        V^{k+1}(\rho) - V^{k}(\rho) & = \frac{1}{1-\gamma} \mathbb{E}_{s \sim d^{k+1}_\rho} \Big[ \langle Q^{k}_s, \pi^{k+1}_s - \pi^k_s \rangle \Big] \\
        & = \frac{1}{1-\gamma} \mathbb{E}_{s \sim d^{k+1}_\rho} \Big[ \langle Q^{k}_s - \widehat{Q}^k_s, \pi^{k+1}_s - \pi^k_s \rangle + \langle \widehat{Q}^k_s, \pi^{k+1}_s - \pi^k_s \rangle \Big] \\
        & \geq \frac{1}{1-\gamma} \mathbb{E}_{s \sim d^{k+1}_\rho} \Big[ -\| Q^{k}_s - \widehat{Q}^k_s \|_\infty \|\pi^{k+1}_s - \pi^k_s \|_1 \Big] \\
        & \geq \frac{1}{1-\gamma} \mathbb{E}_{s \sim d^{k+1}_\rho} \Big[- 2\tau\Big] \\
        & = -\frac{2\tau}{1-\gamma}
    \end{align*}
    which concludes the proof.
\end{proof}

Using the above lemma, we can state and prove the extension of Lemma \ref{lemma:Q_DsctPropPMD} to the inexact setting.

\begin{lemma}\label{lemma:Inexact_Q_DsctPropPMD}
    Consider the policies produced by the iterative updates of IPMD in (\ref{eq:IPMD_update}). For any $k \geq 0$, if $\| \widehat{Q}^k_s - Q^k_s\|_\infty \leq \tau$, then
    \begin{equation*}
        \widehat{Q}^{k+1}(s,a) \geq \widehat{Q}^{k}(s,a)- \frac{2\tau\gamma}{1-\gamma}, \quad \forall (s,a) \in \mathcal{S} \times \mathcal{A}.
    \end{equation*}
\end{lemma}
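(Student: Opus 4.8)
The plan is to mirror the proof of the exact result (Lemma~\ref{lemma:Q_DsctPropPMD}), replacing the monotone value improvement of Lemma~\ref{lemma:DsctPropPMD} with its inexact counterpart, Lemma~\ref{lemma:Inexact_DsctPropPMD}. Recall that in the exact case the whole argument rests on the Bellman-type identity $Q^{k+1}(s,a) - Q^{k}(s,a) = \gamma \sum_{s'} p(s'|s,a)\,(V^{k+1}(s') - V^{k}(s'))$ together with the monotonicity $V^{k+1}(s') \geq V^{k}(s')$. In the inexact setting the value is no longer guaranteed to increase, but Lemma~\ref{lemma:Inexact_DsctPropPMD} shows it can only decrease by at most $\frac{2\tau}{1-\gamma}$, so the same identity should produce a controlled decrease of the action-values rather than genuine monotonicity.

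Concretely, I would first instantiate Lemma~\ref{lemma:Inexact_DsctPropPMD} with $\rho$ taken to be a point mass at an arbitrary state (its conclusion holds for every $\rho \in \Delta(\mathcal{S})$), which yields the per-state bound $V^{k+1}(s') \geq V^{k}(s') - \frac{2\tau}{1-\gamma}$ for all $s' \in \mathcal{S}$. Substituting this into the Bellman identity above and using $\sum_{s'} p(s'|s,a) = 1$ gives, for the \emph{true} action-values, $Q^{k+1}(s,a) \geq Q^{k}(s,a) - \frac{2\tau\gamma}{1-\gamma}$ for every pair $(s,a)$, exactly paralleling the closing computation in the proof of Lemma~\ref{lemma:Q_DsctPropPMD}. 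The remaining step is to pass from the exact action-values to the estimated ones appearing in the statement: applying the accuracy hypothesis $\|\widehat{Q}^{j}_s - Q^{j}_s\|_\infty \leq \tau$ at indices $j=k$ and $j=k+1$ together with the triangle inequality transfers the bound to $\widehat{Q}^{k+1}(s,a)$ relative to $\widehat{Q}^{k}(s,a)$.

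The main point requiring care is this final transfer between estimated and exact action-values. The hypothesis as stated is phrased only at iteration $k$, so I would make explicit that accuracy is invoked at both $k$ and $k+1$ (consistent with the standing high-probability event under which the sample-complexity analysis of Section~\ref{sec:InexactSetting} operates), and track the resulting $\tau$-constant carefully so that it aggregates correctly into the $\frac{2\tau\gamma}{1-\gamma}$ term. Beyond this bookkeeping the argument is routine, as the only genuinely new ingredient over the exact case is the one-sided value degradation supplied by Lemma~\ref{lemma:Inexact_DsctPropPMD}, and the contraction factor $\gamma$ arises for precisely the same reason it does in the exact proof, namely the single-step discounting in the Bellman identity.
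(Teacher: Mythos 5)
Your core argument is exactly the paper's proof: instantiate Lemma~\ref{lemma:Inexact_DsctPropPMD} at point masses to get $V^{k+1}(s') \geq V^{k}(s') - \frac{2\tau}{1-\gamma}$ for every state, then push this through the Bellman identity $Q^{k+1}(s,a) - Q^{k}(s,a) = \gamma \sum_{s'} p(s'|s,a)(V^{k+1}(s') - V^{k}(s'))$ to obtain $Q^{k+1}(s,a) \geq Q^{k}(s,a) - \frac{2\tau\gamma}{1-\gamma}$. The paper's proof stops there, i.e.\ it establishes the inequality for the \emph{true} action-values, and that is also the form in which the lemma is invoked in the proof of Theorem~\ref{Thm:IPMD_SampleComplexity}; the hats in the lemma's statement are evidently a typo. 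Your third step --- transferring to $\widehat{Q}$ via the accuracy hypothesis at $k$ and $k+1$ --- is therefore not needed, and your hope that the extra error ``aggregates correctly into the $\frac{2\tau\gamma}{1-\gamma}$ term'' does not pan out: the triangle inequality at both indices adds $2\tau$, yielding $\widehat{Q}^{k+1}(s,a) \geq \widehat{Q}^{k}(s,a) - \frac{2\tau\gamma}{1-\gamma} - 2\tau = \widehat{Q}^{k}(s,a) - \frac{2\tau}{1-\gamma}$, a strictly weaker constant than the one stated. So either prove and use the unhatted version (as the paper does), or accept the degraded constant $\frac{2\tau}{1-\gamma}$ for the hatted one; you cannot have both the hats and the factor $\gamma$.
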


\begin{proof}
    As in the exact case, since Lemma \ref{lemma:Inexact_DsctPropPMD} holds for any $\rho \in \Delta(\mathcal{S})$, it applies to each state, i.e for all $s \in \mathcal{S}$,
    \begin{align*}
        V^{k+1}(s) - V^{k}(s) \geq - \frac{2\tau}{1-\gamma}.
    \end{align*}
    Using this, we immediately have
    \begin{align*}
        Q^{k+1}(s,a) - Q^k(s,a) = \gamma \sum_{s' \in \mathcal{S}} p(s'|s,a) \Big( V^{k+1}(s') - V^k(s') \Big) \geq \frac{-2\tau\gamma}{1-\gamma},
    \end{align*}
    which concludes the proof.
\end{proof}

\section{Performance difference lemma}\label{sec:PDL}

\begin{lemma}[Performance Difference Lemma]
    For any $\pi, \pi' \in \Pi$, we have
    \begin{equation*}
        V^\pi(\rho) - V^{\pi'}(\rho) = \frac{1}{1-\gamma} \mathbb{E}_{s \sim d^\pi_\rho} \Big[ \langle Q^{\pi'}_s, \pi_s - \pi'_s \rangle \Big].
    \end{equation*}
\end{lemma}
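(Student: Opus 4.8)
The plan is to use the classical telescoping argument underlying the performance difference lemma. First I would unroll $V^\pi(\rho)$ as the discounted reward collected along a trajectory generated by $\pi$ starting from $s_0 \sim \rho$, namely $V^\pi(\rho) = \mathbb{E}_\pi\big[\sum_{t\geq 0} \gamma^t r(s_t,a_t)\big]$, where $\mathbb{E}_\pi$ denotes expectation over the randomness of actions drawn from $\pi$ and transitions under $p$. The idea is to re-express each reward in terms of the value function of the comparison policy $\pi'$. Using the Bellman equation $Q^{\pi'}(s_t,a_t) = r(s_t,a_t) + \gamma\, \mathbb{E}_{s_{t+1}\sim p(\cdot|s_t,a_t)}[V^{\pi'}(s_{t+1})]$, I would substitute
\[
    r(s_t,a_t) = \big(Q^{\pi'}(s_t,a_t) - V^{\pi'}(s_t)\big) + V^{\pi'}(s_t) - \gamma\, \mathbb{E}_{s_{t+1}\sim p(\cdot|s_t,a_t)}[V^{\pi'}(s_{t+1})],
\]
so that each summand splits into an advantage-type term and a pair of value terms designed to telescope.

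Next I would exploit the telescoping. Taking the expectation over the $\pi$-trajectory and applying the tower property, the term $-\gamma^{t+1}\mathbb{E}[V^{\pi'}(s_{t+1})]$ arising at step $t$ cancels the $+\gamma^{t+1}V^{\pi'}(s_{t+1})$ term arising at step $t+1$; all intermediate value terms vanish and only the initial contribution $V^{\pi'}(s_0)$ survives, whose expectation over $s_0 \sim \rho$ equals $V^{\pi'}(\rho)$. Absolute convergence of the infinite series, which justifies the rearrangement and telescoping, follows from $\gamma \in [0,1)$ and $r \in [0,1]$. This yields $V^\pi(\rho) - V^{\pi'}(\rho) = \mathbb{E}_\pi\big[\sum_{t\geq 0}\gamma^t\big(Q^{\pi'}(s_t,a_t) - V^{\pi'}(s_t)\big)\big]$.

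Finally I would rewrite this expectation in terms of the discounted visitation distribution. Expanding the expectation as a sum over time-steps and states gives $\mathbb{E}_\pi\big[\sum_t \gamma^t f(s_t,a_t)\big] = \sum_t \gamma^t \sum_s \mathbb{P}^\pi(s_t = s \mid s_0 \sim \rho)\, \mathbb{E}_{a\sim \pi_s}[f(s,a)]$, and recognising from the definition (\ref{eq:visitation-distrib}) that $\sum_t \gamma^t \mathbb{P}^\pi(s_t = s\mid s_0\sim\rho) = \frac{1}{1-\gamma} d^\pi_\rho(s)$ produces the factor $1/(1-\gamma)$. Averaging the inner term over $a \sim \pi_s$ and using $V^{\pi'}(s) = \langle Q^{\pi'}_s, \pi'_s\rangle$, the per-state advantage becomes $\langle Q^{\pi'}_s, \pi_s\rangle - \langle Q^{\pi'}_s, \pi'_s\rangle = \langle Q^{\pi'}_s, \pi_s - \pi'_s\rangle$, which delivers the claimed identity.

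The only delicate point is the bookkeeping in the telescoping step: making precise that the conditional expectations line up under the tower property so that the value terms cancel across consecutive time-steps. This is routine given that the bounded rewards and $\gamma < 1$ guarantee absolute convergence, so the manipulations of the infinite sums are fully justified.
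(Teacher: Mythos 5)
Your proof is correct and is the standard telescoping argument for the performance difference lemma; the paper itself does not reprove this statement but defers to Lemma 6.1 of Kakade and Langford (2002), whose proof is exactly this decomposition of the reward via the Bellman equation for $\pi'$, telescoping of the value terms, and regrouping under the discounted visitation distribution. Nothing further is needed.
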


The performance difference lemma \citep{Kakade2002ApproximatelyLearning} is a property that relates the difference in values of policies to the policies themselves. The proof can be found in their paper under Lemma 6.1.

\section{Guarantees of Theorem \ref{Thm:MAIN_RESULT} for various step-size choices}\label{sec:ck_in_SS_and_UBs}

We give here two more choices of $\{c_k\}_{k\in \mathbb{Z}_{\geq 0}}$ for the step-size \ref{eq:STEP_SIZE} of PMD and their corresponding guarantees from Theorem \ref{Thm:MAIN_RESULT}:
\begin{itemize}
    \item $c_i = c_0$ for some $c_0 > 0$ yields a step-size with a constant component. The resulting bound is
    \begin{align*}
        \|V^\star - V^k\|_\infty \leq \gamma^k \|V^\star - V^0\|_\infty + \frac{c_0}{1-\gamma},
    \end{align*}
    which converges linearly up to some accuracy controlled by $c_0$.
    \item $c_i = \gamma^{i+1} c_0$ for some initial $c_0 > 0$ will yield a step-size with a component that is geometrically increasing as in \cite{Xiao2022OnMethods}, though at a slower rate than the one discussed in Section \ref{sec:MainResult}. The resulting bound is 
    \begin{align*}
        \|V^\star - V^k\|_\infty \leq \gamma^k \Big( \|V^\star - V^0\|_\infty + k c_0 \Big),
    \end{align*}
    which converges linearly with the sought-for $\gamma$-rate, though in early iterations the $k$ factor may dominate.
\end{itemize}

\newpage

\section{Simulations}\label{sec:simulations}

In this Section, we present simulations (see Figures 1-4 below) made by running NPG (PMD with negative-entropy mirror map, see \citep{Khodadadian2021OnAlgorithm}) on the hard MDP used to prove Theorem \ref{Thm:LB_RESULT} - see Appendix \ref{sec:Proof_LB_RESULT} for the details of the construction. We use $n = 25$ so $|\mathcal{S}| = 51$ and $|\mathcal{A}| = 2$. We use $\delta = (1-\gamma) \gamma^n / 100$, $\gamma = 0.99$, $\pi^0(a_1 | s) = \alpha$ (note that $a_1$ is the optimal action in each state). In each of the plots, we compare the performance of NPG using our step-size (\ref{eq:STEP_SIZE}) from Theorem \ref{Thm:MAIN_RESULT} (denoted \textbf{Adaptive} in the plots and by ADA in the discussion below) with $c_k = \gamma^{2k}$ and $c_0 = 1$ and the step-size of \citep{Xiao2022OnMethods} $\eta_k = \eta_0 / \gamma^k$ (denoted by INC in the discussion below) for a fixed $\eta_0 = 1$. In each of the plots, the left plot is $\|V^\star - V^k\|_\infty$ against iterations and the right plot is $\eta_k$ against iterations.

In all plots, the green curve represents the $\gamma$-rate. In particular it is the curve of $y = \gamma^x$.

\textbf{Discussion}

With fixed initial step-size $\eta_0 = 1$ for INC, the {convergence of NPG with INC can be made arbitrarily slow in early iterations} by choosing $\alpha$ (= $\pi^0(a_1 |s)$) closer to 0 (Figures 1-3). The {convergence of NPG with ADA is unaffected} by small values of $\alpha$ because it adapts to them.

{These observations agree with the theoretical results we established.} Our adaptive step-size (\ref{eq:STEP_SIZE}) achieves the $\gamma$-rate (Theorem \ref{Thm:MAIN_RESULT}) and this is optimal for iterations $k$ up until $n=25$ (Theorem \ref{Thm:LB_RESULT}). The adaptivity in the step-size is necessary to achieve the $\gamma$-rate in these early iterations (Theorem \ref{Thm:StepSizeNecessity}).

In later iterations (Figure 1: iterations $\approx 100$. Figure 2: iterations $ \approx 200$), INC has faster convergence than ADA. In these later iterations, acting greedily with respect to the action value of the current policy gives the optimal policy so INC converges faster because the step-size is larger (see Figure 1-2 right plot). This suggests {combining ADA and INC by taking the maximum of both (Figure 4), yielding faster convergence} than both. This is encouraging for future-work on super-linear convergence (as discussed in \citep{Xiao2022OnMethods}) beyond the optimal $\gamma$-rate valid up to a finite iteration (see Section \ref{sec:Optimality_of_PMD}).

\begin{figure}
\centering
\includegraphics[width = 0.95\linewidth]{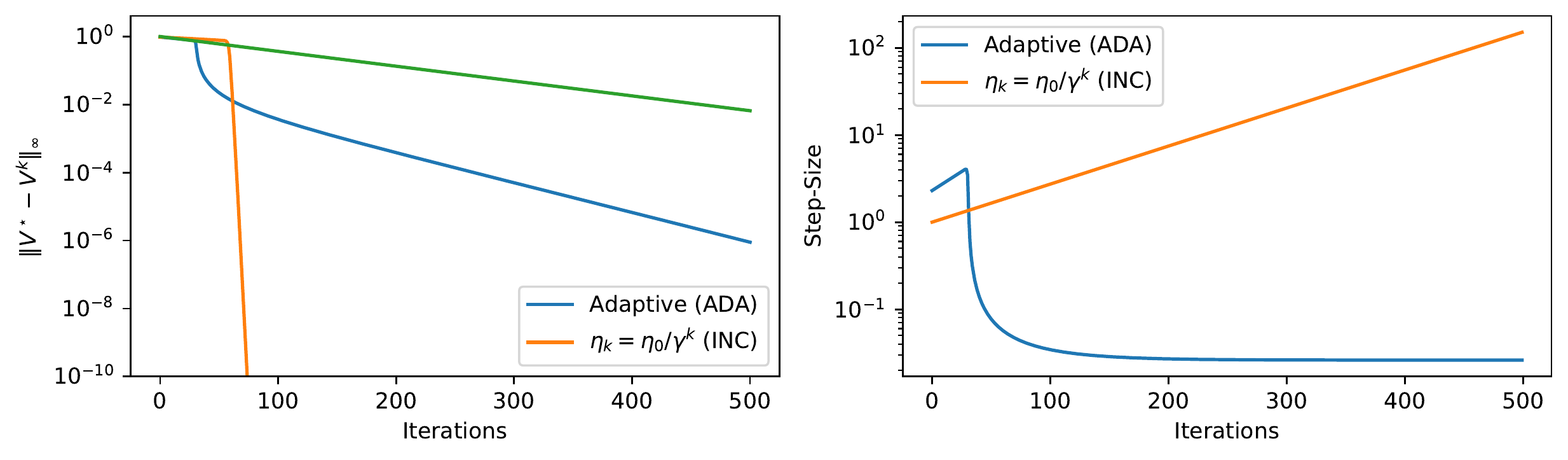}
\caption{$\alpha = 10^{-1}$. Green curve: $y = \gamma^x$.}

\vspace{0.5cm}

\includegraphics[width = 0.95\linewidth]{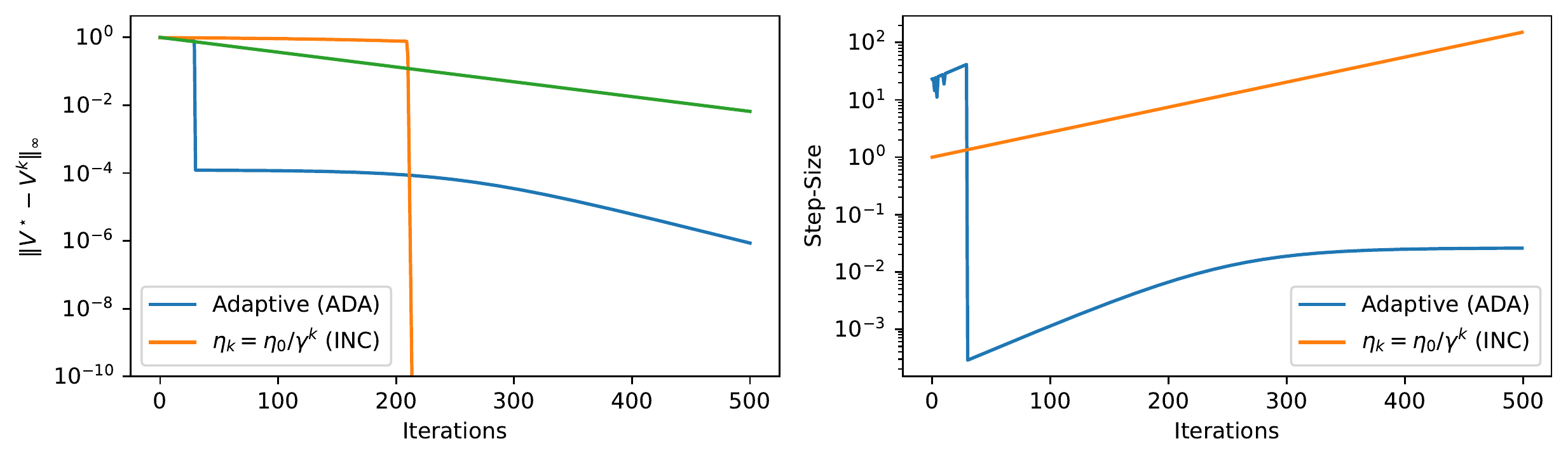}
\caption{$\alpha = 10^{-10}$. Green curve: $y = \gamma^x$.}

\vspace{0.5cm}

\includegraphics[width = 0.95\linewidth]{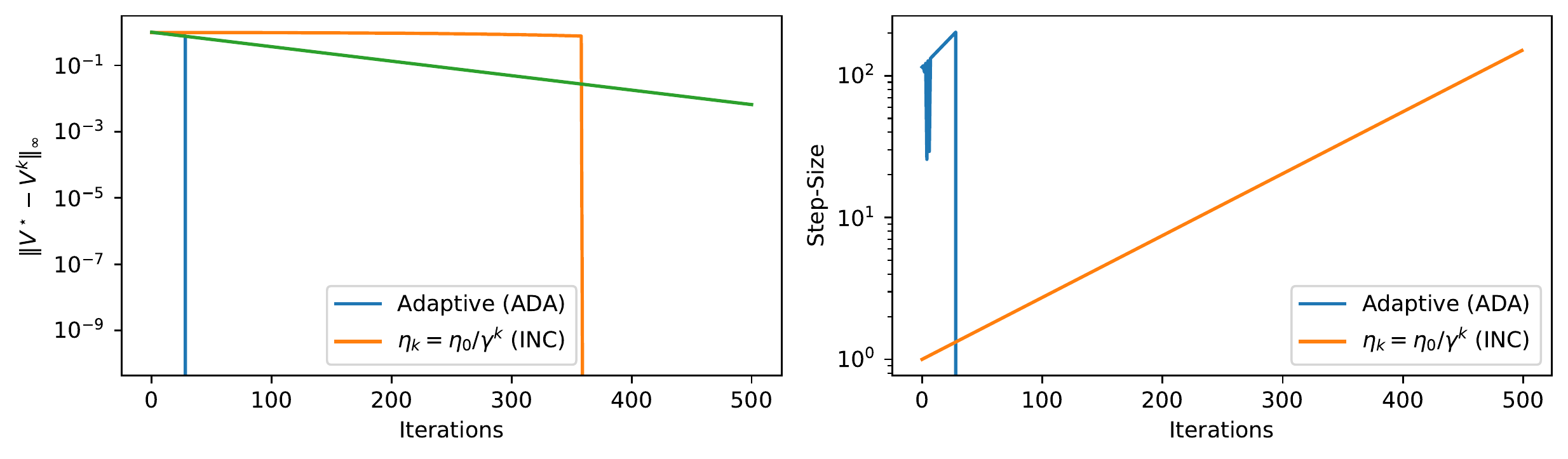}
\caption{$\alpha = 10^{-50}$. Green curve: $y = \gamma^x$.}

\vspace{0.5cm}

\includegraphics[width = 0.95\linewidth]{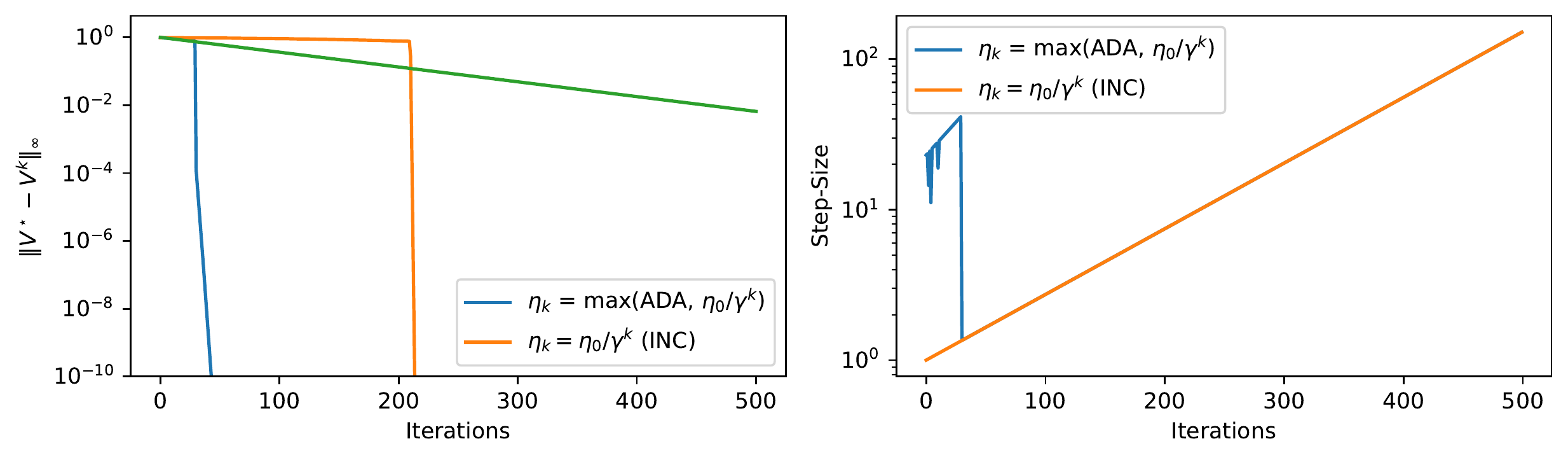}
\caption{$\alpha = 10^{-10}$. $\eta_k$ = max(Adaptive, $\eta_0 / \gamma^k$) means the step-size is chosen as the largest between our "Adaptive" and the geometrically increasing of \citep{Xiao2022OnMethods}. Green curve: $y = \gamma^x$.}
\end{figure}

\newpage

\section{Proof of Theorem \ref{Thm:LB_RESULT}}\label{sec:Proof_LB_RESULT}

\begin{figure}
\centering
\includegraphics[width = 0.99\linewidth]{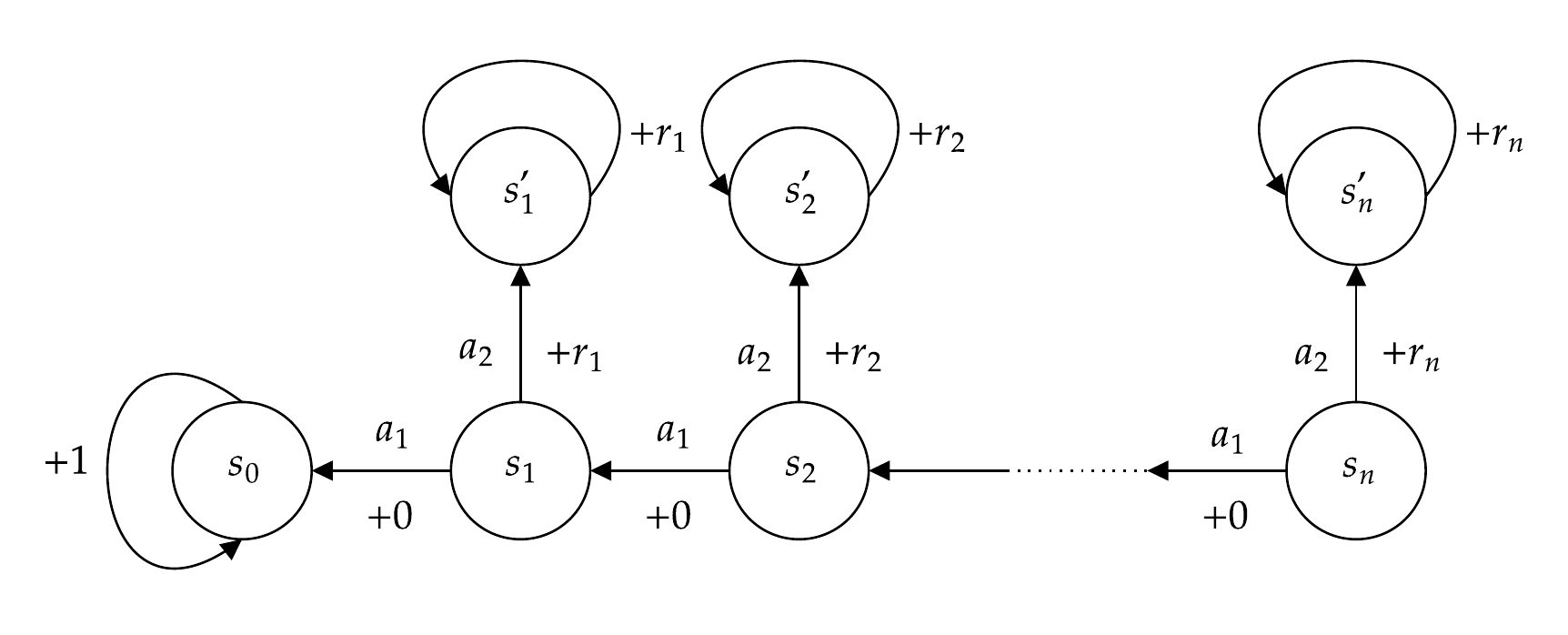}
\caption{Example MDP used in the proof of Theorem \ref{Thm:LB_RESULT}}
\label{figure:LB_RESULT_MDP}
\end{figure}

Fix $n > 0$ and $\delta \in (0, (1-\gamma) \gamma^n)$. Consider the MDP shown in Figure \ref{figure:LB_RESULT_MDP}. The state space is $\mathcal{S} = \{s_0, s_1, s_1', ..., s_n, s_n' \}$ and the action space is $\mathcal{A} = \{ a_1, a_2 \}$. There is a chain of states of length $n+1$ with the states indexed from $0$ to $n$. The left-most state ($s_0$) is absorbing with reward +1. In the other states in the chain ($s_i$ for $i = 1, ..., n$), the agent can take action $a_1$ and move left (to $s_{i-1}$) with reward of 0, or take action $a_2$ and move to an additional absorbing state unique to the state it is currently in ($s_i'$) with reward $r_i = \gamma^{i+1} + \delta$ (that the agent also receives in that state for all future time-steps). Summarising, we have for $1 \leq i \leq n$
\begin{align*}
    & p(s_{i-1} | s_i, a_1) = 1, \quad r(s_i, a_1) = 0, \\
    & p(s_{i}' | s_i, a_2) = 1, \quad r(s_i, a_2) = r_i = \gamma^{i+1} + \delta, \\
    & p(s_{i}' | s_i', a) = 1, \quad r(s_i', a) = r_i = \gamma^{i+1} + \delta \quad \forall a \in \mathcal{A}.
\end{align*}
The value of $\delta$ is carefully restricted so that the optimal action in all the states of the chain is $a_1$. The proof will consist in showing that if the agent starts with an initial policy that places most probability mass on the sub-optimal action $a_2$, then it has to learn that $a_1$ is the optimal action in the state directly to the left before it can start switching from action $a_2$ to $a_1$ in the current state. And this can at best happen one iteration at a time starting starting from the left-most state. In particular, we consider $\pi^0$ s.t $\pi^0(a_1|s) = \alpha$, $\pi^0(a_2|s) = 1-\alpha$ for all states and some $\alpha$ s.t $0 < \alpha \leq \delta(1-\gamma)$. We make the following claim from which the result will follow straightforwardly.

\noindent \textbf{Claim:} Fix $k < n$. The policies produced by PMD satisfy $\pi^k(a_1|s_i) \leq \alpha$ for $k < i \leq n$.

\noindent We prove this claim by induction.

\noindent \textbf{Base Case:}  We want to show that $\pi^1(a_1|s_i) \leq \alpha$ for $i > 1$. We do this by showing that $Q^0(s_i,a_1) \leq Q^0(s_i,a_2)$ for $i>1$ so that the probability of $\pi^1(a_1|s_i)$ cannot increase w.r.t $\pi^0(a_1|s_i)$, which is $\alpha$ (this follows from $\langle Q^k_s, \pi^{k+1}_s - \pi^k_s \rangle \geq 0$ for all iterations of PMD). We have:
\begin{align*}
    Q^0(s_i,a_1) & = \gamma V^0(s_{i-1})\\
    & = \gamma \Big(\alpha Q^0(s_{i-1},a_1) + (1-\alpha)Q^0(s_{i-1},a_2)\Big) \\
    & \leq \gamma \Big(\alpha \frac{\gamma^{i-1}}{1-\gamma} + \frac{r_{i-1}}{1-\gamma}\Big) \\ 
    & \stackrel{(a)}{\leq} \gamma \Big(\delta(1-\gamma) \frac{\gamma^{i-1}}{1-\gamma} + \frac{\gamma^i + \delta}{1-\gamma}\Big) \\ 
    & = \frac{\gamma^{i+1}}{1-\gamma} + \frac{\delta \gamma(1+\gamma^{i-1} - \gamma^{i})}{1-\gamma} \\ 
    & \stackrel{(b)}{\leq} \frac{\gamma^{i+1}}{1-\gamma} + \frac{\delta}{1-\gamma} \\ 
    & = Q^0(s_i,a_2),
\end{align*}
where we used $\alpha \leq \delta(1-\gamma)$ in (a) and $\gamma(1+\gamma^{i-1} - \gamma^{i}) < 1$ for $\gamma \in [0,1)$ in (b). This concludes the base case.

\noindent \textbf{Inductive Step:} Now assume that the claim is true for $k$ and we want to show that $\pi^{k+1}(a_1|s_i) \leq \alpha$ for $i > k+1$. We do this in the same way as the base case by showing that $Q^k(s_i,a_1) \leq Q^k(s_i,a_2)$ for $i > k+1$ so that the probability of $\pi^{k+1}(a_1|s_i)$ cannot increase w.r.t $\pi^k(a_1|s_i)$, which is less than or equal to $\alpha$ by the inductive hypothesis. We have:
\begin{align*}
    Q^k(s_i,a_1) & = \gamma V^k(s_{i-1})\\
    & = \gamma \Big(\pi^k(a_1|s_{i-1}) Q^k(s_{i-1},a_1) + \pi^k(a_2|s_{i-1}) Q^k(s_{i-1},a_2)\Big) \\
    & \stackrel{(a)}{\leq} \gamma \Big(\alpha Q^k(s_{i-1},a_1) + Q^k(s_{i-1},a_2)\Big) \\
    & \leq \gamma \Big(\alpha \frac{\gamma^{i-1}}{1-\gamma} + \frac{r_{i-1}}{1-\gamma}\Big) \\ 
    & \stackrel{(b)}{\leq} \gamma \Big(\delta(1-\gamma) \frac{\gamma^{i-1}}{1-\gamma} + \frac{\gamma^i + \delta}{1-\gamma}\Big) \\ 
    & = \frac{\gamma^{i+1}}{1-\gamma} + \frac{\delta \gamma(1+\gamma^{i-1} - \gamma^{i})}{1-\gamma} \\ 
    & \stackrel{(c)}{\leq} \frac{\gamma^{i+1}}{1-\gamma} + \frac{\delta}{1-\gamma} \\ 
    & = Q^k(s_i,a_2),
\end{align*}
where we used in (a) that $\pi^k(a_1|s_{i-1}) \leq \alpha$ for $i>k+1$, which is true by the inductive hypothesis since $i-1 > k$, in (b) that $\alpha \leq \delta(1-\gamma)$ and in (c) that $\gamma(1+\gamma^{i-1} - \gamma^{i}) < 1$ for $\gamma \in [0,1)$. This concludes the proof of the claim.

\noindent Now using the claim
\begin{align*}
    V^k(s_{k+1}) & = \pi^k(a_1|s_{k+1}) Q^k(s_{k+1},a_1) + \pi^k(a_2|s_{k+1}) Q^k(s_{k+1},a_2) \\
    & \leq \alpha \frac{\gamma^{k+1}}{1-\gamma} + \frac{r_{k+1}}{1-\gamma} \\
    & = \alpha \frac{\gamma^{k+1}}{1-\gamma} + \frac{\gamma^{k+2} + \delta}{1-\gamma},
\end{align*}
so
\begin{align}
    V^\star(s_{k+1}) - V^k(s_{k+1}) & \geq \frac{\gamma^{k+1}}{1-\gamma} - \alpha \frac{\gamma^{k+1}}{1-\gamma} - \frac{\gamma^{k+2} + \delta}{1-\gamma} \nonumber \\
    & = \frac{\gamma^{k+1} (1-\gamma)}{1-\gamma} - \frac{\alpha\gamma^{k+1} + \delta}{1-\gamma} \nonumber \\
    & \geq \gamma^{k+1} - \frac{\alpha + \delta}{1-\gamma} \nonumber \\
    & \geq \gamma^{k+1} - \frac{2\delta}{1-\gamma} \label{eq:Proof_LB_intermed},
\end{align}
where we used that $\alpha \leq \delta$. Now note that
\begin{align*}
    V^0(s_1) & = \alpha Q^0(s_1,a_1) + (1-\alpha) Q^0(s_1,a_2)\\
    & = \alpha \frac{\gamma}{1-\gamma} + (1-\alpha) \frac{\gamma^2 + \delta}{1-\gamma},
\end{align*}
so
\begin{align*}
    V^\star(s_1) - V^0(s_1) & = \frac{\gamma}{1-\gamma} - \alpha \frac{\gamma}{1-\gamma} - (1-\alpha) \frac{\gamma^2 + \delta}{1-\gamma}\\
    & = (1-\alpha) \frac{\gamma}{1-\gamma} - (1-\alpha) \frac{\gamma^2 + \delta}{1-\gamma}\\
    & =  \frac{1-\alpha}{1-\gamma} \Big(\gamma - \gamma^2 - \delta  \Big) \\
    & \leq  \frac{1-\alpha}{1-\gamma} \Big(\gamma - \gamma^2 \Big) \\
    & =  \gamma \frac{1-\alpha}{1-\gamma} \Big(1 - \gamma \Big) \\
    & =  \gamma (1-\alpha) \\
    & \leq \gamma
\end{align*}
and by induction we can show this is the case for all states (above is base case), the inductive step is as follows (assuming $V^\star(s_k) - V^0(s_k) \leq \gamma$),
\begin{align*}
    V^\star(s_{k+1}) - V^0(s_{k+1}) & = \frac{\gamma^{k+1}}{1-\gamma} - (1-\alpha)\frac{\gamma^{k+2} + \delta}{1-\gamma} - \alpha\gamma V^0(s_k) \\
    & =  (1-\alpha) \Big[ \frac{\gamma^{k+1} - \gamma^{k+2} - \delta}{1-\gamma} \Big] + \alpha\gamma \Big[ V^\star(s_k) - V^0(s_k) \Big] \\
    & \leq  (1-\alpha) \gamma^{k+1} + \alpha\gamma^2 \\
    & \leq \gamma
\end{align*}
and so
\begin{align*}
    \|V^\star - V^0\|_\infty \leq \gamma,
\end{align*}
which combining with (\ref{eq:Proof_LB_intermed}) gives,
\begin{align*}
    & V^\star(s_{k+1}) - V^k(s_{k+1}) \geq \gamma^{k} \|V^\star - V^0\|_\infty - \frac{2\delta}{1-\gamma} \\
    \implies & \|V^\star - V^k\|_\infty \geq \gamma^{k} \|V^\star - V^0\|_\infty - \frac{2\delta}{1-\gamma}.
\end{align*}
Choosing $\delta = \frac{(1-\gamma)}{4} \gamma^n \|V^\star - V^0\|_\infty \leq \frac{(1-\gamma)}{4} \gamma^{n+1}$ (which is in the valid range of $\delta$) concludes the proof since $\frac{2\delta}{1-\gamma} \leq \frac{1}{2} \gamma^{k} \|V^\star - V^0\|_\infty$. $\hfill \blacksquare$

\newpage

\section{Proof of Theorem \ref{Thm:StepSizeNecessity}}\label{sec:STEP_SIZE_NECESSITY_PROOF}

Consider the same MDP as in the proof of Theorem \ref{Thm:LB_RESULT} in Appendix \ref{sec:Proof_LB_RESULT} (see Figure \ref{figure:LB_RESULT_MDP}). Denote $c = \frac{1-\gamma}{8}$ and note that $c < \frac{\sqrt{\gamma}}{1+\sqrt{\gamma}} \frac{1-\gamma}{2}$ since $\frac{1}{4} < \frac{\sqrt{\gamma}}{1+\sqrt{\gamma}}$ for $\gamma > 0.2$.

Suppose you consider NPG updates with initial policy $\pi^0(a_1|s_i) = \alpha$. Recall that NPG is the instance of PMD with relative entropy as the mirror map. It can be shown that NPG has the closed form update
\begin{align*}
    \pi^{k+1}(a|s) = \frac{\pi^k(a|s) e^{\eta_k Q^k(s,a)}}{\sum_{a'} \pi^k(a'|s) e^{\eta_k Q^k(s,a')}}.
\end{align*}

We know from the proof of Theorem \ref{sec:Proof_LB_RESULT} that for any step-size regime, for $i > k+1$
\begin{align*}
    Q^k(s_i,a_1) \leq Q^k(s_i,a_2).
\end{align*}
Now, $\|V^\star - V^0\|_\infty = V^\star(s_1) - V^0(s_1) \leq \gamma - \frac{\delta}{1-\gamma}$ (see Section \ref{sec:SS_Necessity_Subproof} below). The idea of the proof is to show that satisfying the bound given in the statement of the theorem will imply that a certain condition on the step-size.

Fix a state $s_k$ and let $k_0$ be the first iteration where $Q^{k_0}(s_k,a_1) > Q^{k_0}(s_k,a_2)$. By the above, we must have $k \leq k_0 + 1$, or $k_0 \geq k-1$. By the proof of Theorem \ref{sec:Proof_LB_RESULT}, we also have $\pi^{k_0}(a_1|s_k) \leq \alpha$ (before iteration $k_0$, $Q(s_k,\cdot)$ favors $a_2$, so $\pi^{k_0}(a_1|s_k)$ has not increased compared to $\pi^0(a_1|s_k) = \alpha$).

We want a $\gamma$-contraction at every iteration, i.e. we assume the following is satisfied:
\begin{align*}
    V^\star(s_k) - V^{k_0+1}(s_k) \leq \gamma^{k_0+1}(\|V^\star-V^0\|_\infty + c) \leq \gamma^{k_0+1}(\gamma - \frac{\delta}{1-\gamma} + c).
\end{align*}

Now, by direct computation,
\begin{align*}
    V^\star(s_k) - V^{k_0+1}(s_k) & = \pi^{k_0+1}(a_1|s_k)\gamma(V^\star(s_{k-1}) - V^{k_0+1}(s_{k-1})) + \pi^{k_0+1}(a_2|s_2) \frac{\gamma^k - r_k}{1-\gamma} \\
    & \geq \pi^{k_0+1}(a_2|s_2) \frac{\gamma^k - r_k}{1-\gamma} = \pi^{k_0+1}(a_2|s_2)(\gamma^k - \frac{\delta}{1-\gamma}).
\end{align*}
Putting this together with the above (this is an implication as this is about the necessity rather than sufficiency), we must have:
\begin{align*}
    & \pi^{k_0+1}(a_2|s_2)(\gamma^k - \frac{\delta}{1-\gamma}) \leq \gamma^{k_0+1}(\gamma - \frac{\delta}{1-\gamma} + c) \\
    \implies & \pi^{k_0+1}(a_2|s_2) \leq \frac{\gamma^{k_0+1}(\gamma - \frac{\delta}{1-\gamma} + c)}{(\gamma^k - \frac{\delta}{1-\gamma})} = \beta
\end{align*}


If we choose $\delta < \frac{1}{2}(1-\gamma)(1-\sqrt{\gamma}) \gamma^{k}$ then $\beta<\sqrt{\gamma}$ and require
\begin{align*}
    \pi^{k_0+1}(a_2|s_2) \leq \sqrt{\gamma}.
\end{align*}

To see this, start from $\beta \leq \sqrt{\gamma}$, this is equivalent to
\begin{align*}
    & \frac{\gamma^{k_0+1}(\gamma - \frac{\delta}{1-\gamma} + c)}{(\gamma^k - \frac{\delta}{1-\gamma})} \leq \sqrt{\gamma} \\
    \impliedby & \frac{\gamma^{k}(\gamma - \frac{\delta}{1-\gamma} + c)}{(\gamma^k - \frac{\delta}{1-\gamma})} \leq \sqrt{\gamma} \quad \text{ since } k_0 +1 \geq k \\
    \iff & \gamma^{k}(\gamma - \frac{\delta}{1-\gamma} + c) \leq \sqrt{\gamma } (\gamma^k - \frac{\delta}{1-\gamma}) \\
    \impliedby & \gamma^{k}(\gamma + c) \leq \sqrt{\gamma } (\gamma^k - \frac{\delta}{1-\gamma}) \\
    \iff & \gamma^{k-\frac{1}{2}}(\gamma + c) \leq \gamma^k - \frac{\delta}{1-\gamma} \\
    \iff & \frac{\delta}{1-\gamma} \leq  \gamma^{k-\frac{1}{2}}(\sqrt{\gamma} - \gamma - c) \\
    \impliedby & \frac{\delta}{1-\gamma} \leq  \gamma^{k-\frac{1}{2}}(\sqrt{\gamma} - \gamma - \frac{\sqrt{\gamma}}{1+\sqrt{\gamma}} \frac{1-\gamma}{2}) \quad \text{ since } - c > - \frac{\sqrt{\gamma}}{1+\sqrt{\gamma}} \frac{1-\gamma}{2} \\
    \iff & \frac{\delta}{1-\gamma} \leq  \gamma^{k-\frac{1}{2}}(\sqrt{\gamma} - \gamma - \frac{\sqrt{\gamma}}{2} (1-\sqrt{\gamma}))\\
    \iff & \frac{\delta}{1-\gamma} \leq  \gamma^{k-\frac{1}{2}}(\sqrt{\gamma} - \gamma - \frac{\sqrt{\gamma}}{2} +\frac{\gamma}{2})\\
    \iff & \frac{\delta}{1-\gamma} \leq  \gamma^{k-\frac{1}{2}}( \frac{\sqrt{\gamma}}{2} -\frac{\gamma}{2})\\
    \iff & \delta \leq  \frac{1}{2} \gamma^{k}(1-\sqrt{\gamma})(1-\gamma),
\end{align*}
which is the condition for $\delta$ we imposed initially.

To achieve the above condition $\pi^{k_0+1}(a_2|s_2) \leq \sqrt{\gamma}$, recalling that $\pi^{k_0}(a_2|s_2) \geq 1-\alpha$, $\eta_{k_0}$ has to satisfy
\begin{align*}
    \eta_{k_0} \geq \frac{1}{Q^{k_0}(s_k, a_1) - Q^{k_0}(s_k, a_2)} \Big[ \log ((1 - \alpha)(1- \sqrt{\gamma})) + KL(\tilde{\pi}^{k_0 + 1}_{s_k}, \pi^{k_0}_{s_k}) \Big] 
\end{align*}

To see this, again start from $\pi^{k_0+1}(a_2|s_2) \leq \sqrt{\gamma}$, this is equivalent to (use $k_0 = m$ for simplicity of notation) using the closed-form update of NPG:
\begin{align*}
    & \pi^{m}(a_2|s_2) \exp(\eta_{m} Q^{m}(s_k,a_2)) \leq \\ & \sqrt{\gamma} ( \pi^{m}(a_2|s_2) \exp(\eta_{m} Q^{m}(s_k,a_2)) +  \pi^{m}(a_1|s_2) \exp(\eta_{m} Q^{m}(s_k,a_1) )) \\
    \iff & \frac{1}{\sqrt{\gamma}} \leq 1 + \frac{\pi^{m}(a_1|s_2)}{\pi^{m}(a_2|s_2)} \exp(\eta_{m}(Q^m(s_k,a_1)- Q^{m}(s_k,a_2))) \\
    \iff & \frac{1-\sqrt{\gamma}}{\sqrt{\gamma}} \frac{\pi^{m}(a_2|s_2)}{\pi^{m}(a_1|s_2)}  \leq \exp(\eta_{m}(Q^m(s_k,a_1)- Q^{m}(s_k,a_2))) \\
    \iff & \eta_m (Q^m(s_k,a_1)- Q^{m}(s_k,a_2)) \geq \log \Big( \frac{1-\sqrt{\gamma}}{\sqrt{\gamma}} \frac{\pi^{m}(a_2|s_2)}{\pi^{m}(a_1|s_2)} \Big) \\
    \iff & \eta_m \geq \frac{1}{Q^m(s_k,a_1)- Q^{m}(s_k,a_2)} \Big[ \log \Big( \frac{1-\sqrt{\gamma}}{\sqrt{\gamma}} \pi^{m}(a_2|s_2) \Big) + \log \Big( \frac{1}{\pi^{m}(a_1|s_2)} \Big) \Big] \\
    \implies & \eta_m \geq \frac{1}{Q^m(s_k,a_1)- Q^{m}(s_k,a_2)} \Big[ \log \Big((1-\alpha) \frac{1-\sqrt{\gamma}}{\sqrt{\gamma}}  \Big) +  KL(\tilde{\pi}^{m + 1}_{s_k}, \pi^{m}_{s_k}) \Big] \\
    \implies & \eta_m \geq \frac{1}{Q^m(s_k,a_1)- Q^{m}(s_k,a_2)} \Big[ \log \Big((1-\alpha) (1-\sqrt{\gamma})  \Big) +  KL(\tilde{\pi}^{m + 1}_{s_k}, \pi^{m}_{s_k}) \Big].
\end{align*}
As we take $\alpha \rightarrow 0$, the KL term will dominate. In particular, note $\alpha < 1- \gamma$ so $1-\alpha > \gamma$ and\begin{align*}
    (1 - \alpha)(1- \sqrt{\gamma}) > \gamma (1- \sqrt{\gamma})
\end{align*}
and if we further impose the condition $\alpha < \gamma^2 (1-\sqrt{\gamma})^2$ then
\begin{align*}
    (1 - \alpha)(1- \sqrt{\gamma}) > \sqrt{\alpha} > \sqrt{\pi^{k_0}(a_1|s_2)}
\end{align*}
and the step-size needs to satisfy the following condition:

\begin{align}
    \eta_{k_0} & \geq \frac{1}{Q^{k_0}(s_k, a_1) - Q^{k_0}(s_k, a_2)} \Big[ \log (\sqrt{\pi^{k_0}(a_1|s_2)}) + KL(\tilde{\pi}^{k_0 + 1}_{s_k}, \pi^{k_0}_{s_k}) \Big] \nonumber \\
    & = \frac{1}{Q^{k_0}(s_k, a_1) - Q^{k_0}(s_k, a_2)} \Big[ - \frac{1}{2} KL(\tilde{\pi}^{k_0 + 1}_{s_k}, \pi^{k_0}_{s_k}) + KL(\tilde{\pi}^{k_0 + 1}_{s_k}, \pi^{k_0}_{s_k}) \Big] \nonumber \\
    & = \frac{1}{2(Q^{k_0}(s_k, a_1) - Q^{k_0}(s_k, a_2))} KL(\tilde{\pi}^{k_0 + 1}_{s_k}, \pi^{k_0}_{s_k}) \label{eq:SS_Necessity_Proof_Bound_Before_LAST}
\end{align}

\noindent\textbf{Distinct Iterations:} Note that the iteration $k_0(s_k)$ where $Q(\cdot,s_k)$ starts becoming bigger at $a_1$ that $a_2$ is distinct for each $s_k$. Fix any $s_k$ and $k_0 = k_0(s_k)$. We have
\begin{align*}
    Q^{k_0}(s_k,a_1) < Q^{k_0}(s_k,a_2) \\
    Q^{{k_0}+1}(s_k,a_2) \leq Q^{{k_0}+1}(s_k,a_1)    
\end{align*}
then $\pi^{{k_0}+1}(a_1|s_k) \leq \pi^{k_0}(a_1|s_k) \leq \alpha$ (since $Q^{t}$ points towards $a_2$ in $s_k$ for all $t\leq k_0$). Then applying exactly the same steps as in the proof of Theorem \ref{Thm:LB_RESULT}, we have
\begin{align*}
    Q^{{k_0}+1}(s_{k+1}, a_1) < Q^{{k_0}+1}(s_{k+1}, a_2),
\end{align*}
meaning that $k_0(s_k)$ is disctinct to $k_0(s_{k+1})$.

\noindent \textbf{Upper Bounding Q-value difference:} We want to upper-bound the Q-value difference appearing in the step-size condition above. We have,
\begin{align*}
    Q^{k_0}(s_k,a_2) & = \frac{r_k}{1-\gamma} = \frac{\gamma^{k+1} + \delta}{1-\gamma} \\
    Q^{k_0}(s_k,a_1) & = \gamma V^{k_0}(s_{k-1}) \leq \frac{\gamma^k}{1-\gamma}.
\end{align*}
So,
\begin{align*}
     Q^{k_0}(s_k,a_1) - Q^{k_0}(s_k,a_2) & \leq \frac{\gamma^k}{1-\gamma} - \frac{\gamma^{k+1} + \delta}{1-\gamma} \\
     & = \gamma^k - \frac{\delta}{1-\gamma} \\
     & \leq \gamma^k.
\end{align*}
Plugging this into the above bound (\ref{eq:SS_Necessity_Proof_Bound_Before_LAST}), if the iterates of NPG are to satisfy the bound with the $\gamma$-rate in the statement of the theorem, the step-size must at least satisfy the following condition: 
\begin{align*}
    \eta_{k_0} & \geq \frac{1}{2 \gamma^k} KL(\tilde{\pi}^{k_0 + 1}_{s_k}, \pi^{k_0}_{s_k}),
\end{align*}
which concludes the proof. $\hfill \blacksquare$

\subsection{Largest sub-optimality gap at iteration 0}\label{sec:SS_Necessity_Subproof}

In this section, we prove the claim that
\begin{align*}
    \|V^\star - V^0\|_\infty = V^\star(s_1) - V^0(s_1) \leq \gamma - \frac{\delta}{1-\gamma}
\end{align*}
\textbf{Proof:} First of all, $V^\star(s_1) - V^0(s_1) = \pi^0(a_2|s_1)\frac{\gamma - r_1}{1-\gamma} = (1-\alpha) (\gamma - \frac{\delta}{1-\gamma}) \leq  \gamma - \frac{\delta}{1-\gamma}$. For the first part, we proceed by induction. We will use throughout that 
\begin{align*}
    \frac{\gamma^{k} - r_{k}}{1-\gamma} = \gamma^{k} - \frac{\delta}{1-\gamma} \leq V^\star(s_1) - V^0(s_1) = (1-\alpha) (\gamma - \frac{\delta}{1-\gamma}).
\end{align*}
This is true if (when LHS is the largest)
\begin{align*}
    \gamma^{2} - \frac{\delta}{1-\gamma} \leq (1-\alpha) (\gamma - \frac{\delta}{1-\gamma})
\end{align*}
which holds when
\begin{align*}
    & \alpha \leq \frac{\gamma (1-\gamma)^2}{\gamma(1-\gamma) - \delta} \\
    \impliedby &  \alpha \leq 1-\gamma
\end{align*}
\textbf{Base Case:}
\begin{align*}
    V^\star(s_2) - V^0(s_2) & = \alpha \gamma (V^\star(s_1) - V^0(s_1)) + (1-\alpha)\frac{\gamma^{2} - r_{2}}{1-\gamma} \\ 
    & \leq \alpha \gamma (V^\star(s_1) - V^0(s_1)) + (1-\alpha) (V^\star(s_1) - V^0(s_1)) \\
    & \leq V^\star(s_1) - V^0(s_1)
\end{align*}
\textbf{Inductive Step:} Assume true for $k$. Then, \begin{align*}
    V^\star(s_{k+1}) - V^0(s_{k+1}) & = \alpha \gamma (V^\star(s_k) - V^0(s_k)) + (1-\alpha)\frac{\gamma^{k+1} - r_{k+1}}{1-\gamma} \\
    & \leq \alpha \gamma (V^\star(s_1) - V^0(s_1)) + (1-\alpha) (V^\star(s_1) - V^0(s_1)) \\
    & \leq V^\star(s_1) - V^0(s_1),
\end{align*}
which concludes the proof. $\hfill \blacksquare$

\newpage

\section{Inexact policy mirror descent and the generative model}\label{sec:IPMD_Appendix}

The following Lemma from \cite{Xiao2022OnMethods} controls the accuracy of the estimator $\widehat{Q}^k_s$ specified in (\ref{eq:Q_estimator}) of Section \ref{sec:InexactSetting} with respect to $H$ and $M_k$:

\begin{lemma}[Lemma 15 in \cite{Xiao2022OnMethods}]\label{lemma:Q_estimator}
    Consider using (\ref{eq:Q_estimator}) to estimate $Q^k_s$ for all state-action pairs for $K$ iterations of IPMD. Then for $\delta \in (0,1)$, if for all $k \leq K$,
    \begin{equation*}
        M_k \geq \frac{\gamma^{-2H}}{2} \text{log} \big( \frac{2 K |\mathcal{S}| |\mathcal{A}|}{\delta} \big).
    \end{equation*}
    Then with probability at least $1-\delta$, we have for all $k \leq K$,
    \begin{equation*}
        \|\widehat{Q}^k_s - Q^k_s\|_\infty \leq \frac{2 \gamma^H}{1 - \gamma}.
    \end{equation*}
\end{lemma}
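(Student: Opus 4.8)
The plan is to split the estimation error $\widehat{Q}^k(s,a) - Q^k(s,a)$ into a deterministic truncation bias and a stochastic fluctuation term, and to bound each by $\gamma^H/(1-\gamma)$ so that the triangle inequality delivers the claimed $2\gamma^H/(1-\gamma)$. First I would introduce the truncated action-value $Q^{k,H}(s,a) = \mathbb{E}\big[\widehat{Q}^k_{(i)}(s,a)\big] = \mathbb{E}\big[\sum_{t=0}^{H-1}\gamma^t r(s_t,a_t) \mid \pi^k, s_0 = s, a_0 = a\big]$, which is exactly the common mean of the $M_k$ i.i.d. rollout estimates $\widehat{Q}^k_{(i)}(s,a)$ averaged in (\ref{eq:Q_estimator}). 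Since $\widehat{Q}^k(s,a)$ is their empirical average, $Q^{k,H}(s,a)$ is the natural pivot separating bias from variance.

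For the bias, because $r(s,a) \in [0,1]$ the discarded tail satisfies $|Q^k(s,a) - Q^{k,H}(s,a)| = |\mathbb{E}[\sum_{t=H}^\infty \gamma^t r(s_t,a_t)]| \le \sum_{t=H}^\infty \gamma^t = \gamma^H/(1-\gamma)$, giving the first half of the bound with no randomness. For the fluctuation, each rollout estimate is bounded, $0 \le \widehat{Q}^k_{(i)}(s,a) \le \sum_{t=0}^{H-1}\gamma^t = (1-\gamma^H)/(1-\gamma)$, so Hoeffding's inequality applied to the average of $M_k$ such i.i.d. terms gives, for the choice $\epsilon = \gamma^H/(1-\gamma)$, the bound $\mathbb{P}(|\widehat{Q}^k(s,a) - Q^{k,H}(s,a)| > \epsilon) \le 2\exp(-2 M_k \gamma^{2H}/(1-\gamma^H)^2) \le 2\exp(-2 M_k \gamma^{2H})$, where the last step uses $(1-\gamma^H)^2 \le 1$.

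It then remains to calibrate $M_k$ and take a union bound. Requiring the right-hand side above to be at most $\delta/(K|\mathcal{S}||\mathcal{A}|)$ is, after taking logarithms, precisely the stated condition $M_k \ge \tfrac{1}{2}\gamma^{-2H}\log(2K|\mathcal{S}||\mathcal{A}|/\delta)$, the factor $2$ inside the logarithm absorbing the two-sided Hoeffding constant. A union bound over the $K$ iterations and the $|\mathcal{S}||\mathcal{A}|$ state-action pairs ensures that, with probability at least $1-\delta$, the inequality $|\widehat{Q}^k(s,a) - Q^{k,H}(s,a)| \le \gamma^H/(1-\gamma)$ holds simultaneously, and combining with the bias bound via the triangle inequality yields $\|\widehat{Q}^k_s - Q^k_s\|_\infty \le 2\gamma^H/(1-\gamma)$ for all $k \le K$ and all $s$. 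The one point requiring care — and the main obstacle — is that $\pi^k$ is itself random, built from the samples of earlier iterations, so the rollouts at iteration $k$ are only conditionally i.i.d. given the history; however, since the Hoeffding bound holds conditionally with a constant independent of $\pi^k$, it survives taking expectation over the history, and the union bound across iterations therefore goes through unchanged.
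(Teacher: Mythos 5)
Your argument is correct and is essentially the standard proof of this lemma: the paper itself does not reprove it but defers to Lemma 15 of \cite{Xiao2022OnMethods}, whose proof is exactly your decomposition into a deterministic truncation bias of $\gamma^H/(1-\gamma)$ plus a Hoeffding deviation of $\gamma^H/(1-\gamma)$ with a union bound over the $K|\mathcal{S}||\mathcal{A}|$ estimates. Your calibration of $M_k$ and your remark on handling the randomness of $\pi^k$ by conditioning on the history are both accurate, so nothing further is needed.
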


\noindent The proof of this result can be found in Lemma 15 of \cite{Xiao2022OnMethods}.

\subsection{Proof of Theorem \ref{Thm:IPMD_SampleComplexity}}\label{sec:Proof_SampleComplexity}

This proof is similar to that of \cite{Xiao2022OnMethods} (Theorem 14). It is also similar in structure to the proof of Theorem \ref{Thm:MAIN_RESULT} in Section \ref{sec:PROOF}.

Fix a state $s \in \mathcal{S}$ and an integer $k \geq 0$. For now let's assume that our Q-estimates are $\tau$-accurate ($\tau>0$), i.e.
\begin{align*}
    \|Q^k - \widehat{Q}^k\|_\infty \leq \tau
\end{align*}
for all $k\geq0$. With this assumption, we have from Lemma \ref{lemma:Inexact_Q_DsctPropPMD} in Appendix \ref{sec:Proof_DsctPropPMD},
\begin{align*}
    Q^{k+1}(s,a) \geq Q^k(s,a) - \frac{2\gamma \tau}{1-\gamma}, \quad \forall (s,a) \in \mathcal{S} \times \mathcal{A}.
\end{align*}
Now proceeding in a similar way to Section \ref{sec:PROOF},
\begin{align*}
    \langle \widehat{Q}^{k}_s, \pi^\star_s - \pi^{k+1}_s \rangle & = \langle Q^{k}_s, \pi^\star_s - \pi^{k+1}_s \rangle + \langle \widehat{Q}^{k}_s - Q^k_s, \pi^\star_s - \pi^{k+1}_s \rangle \\
    & \geq \langle Q^{k}_s, \pi^\star_s \rangle - \langle Q^{k}_s, \pi^{k+1}_s \rangle - \| \widehat{Q}^{k}_s - Q^k_s\|_\infty \|\pi^\star_s - \pi^{k+1}_s\|_1 \\
    & \geq \langle Q^{k}_s, \pi^\star_s \rangle - \langle Q^{k+1}_s, \pi^{k+1}_s \rangle - \frac{2\gamma \tau}{1-\gamma} - 2\tau\\
    & \geq \langle Q^{k}_s, \pi^\star_s \rangle - V^{k+1}(s) - \frac{4\gamma \tau}{1-\gamma} \\
    & = \langle Q^{k}_s - Q^\star_s, \pi^\star_s \rangle + V^\star(s) - V^{k+1}(s) - \frac{4\gamma \tau}{1-\gamma} \\
    & \geq  - \|Q^\star_s - Q^{k}_s\|_\infty + V^\star(s) - V^{k+1}(s) - \frac{4\gamma \tau}{1-\gamma} \\
    & \geq  - \gamma \|V^\star - V^k\|_\infty + V^\star(s) - V^{k+1}(s) - \frac{4\gamma \tau}{1-\gamma}.
\end{align*}

Now again proceeding exactly as in Section \ref{sec:PROOF} with this extra $\tau$-term using the step-size condition ($c_k = \gamma^{2k+1}$), we end up with
\begin{align*}
    \|V^\star - V^{k+1}\|_\infty & \leq\gamma \|V^\star - V^k\|_\infty + \gamma^{2k+1} + \frac{4\gamma \tau}{1-\gamma}.
\end{align*}
Unravelling this recursion yields
\begin{align*}
    \|V^\star - V^k\|_\infty & \leq \gamma^k \Big( \|V^\star - V^0\|_\infty + \sum_{i=1}^{k} \gamma^{-i} \gamma^{2(i-1)+1} \Big) + \frac{4\gamma \tau}{1-\gamma} \sum_{i=0}^{k-1} \gamma^i \\
    & \leq \gamma^k \Big( \|V^\star - V^0\|_\infty + \frac{1}{1-\gamma} \Big) + \frac{4\gamma \tau}{(1-\gamma)^2}.
\end{align*}
Now using the properties of the estimator (\ref{eq:Q_estimator}) in Lemma \ref{lemma:Q_estimator}, we have with probability $1-\delta$ for all $0 \leq k \leq K$,
\begin{align*}
    \tau = \frac{2\gamma^H}{1-\gamma},
\end{align*}
giving 
\begin{align*}
    \|V^\star - V^k\|_\infty & \leq \gamma^k \Big( \|V^\star - V^0\|_\infty + \frac{1}{1-\gamma} \Big) + \frac{8\gamma^{H}}{(1-\gamma)^3} \\
    & \leq \frac{2}{1-\gamma} \gamma^k  + \frac{8\gamma^{H}}{(1-\gamma)^3}.
\end{align*}
This establishes the first bound. Now
\begin{align*}
    & K > \frac{1}{1-\gamma} \text{log} \frac{4}{(1-\gamma)\varepsilon} \implies \frac{2}{1-\gamma} \gamma^k \leq \varepsilon/2, \\
    & H \geq \frac{1}{1-\gamma} \text{log} \frac{16}{(1-\gamma)^3 \varepsilon} \implies \frac{8\gamma^{H}}{(1-\gamma)^3} \leq \varepsilon/2
\end{align*}
giving
\begin{align*}
    \|V^\star - V^k\|_\infty \leq \varepsilon/2 + \varepsilon/2 = \varepsilon
\end{align*}
as required. In terms of M, we have
\begin{align*}
    M & \geq \frac{\gamma^{-2H}}{2} \text{log} \frac{2 K |\mathcal{S}| |\mathcal{A}|}{\delta} \\
    & \geq \frac{1}{2} \Big( \frac{16}{(1-\gamma)^3 \varepsilon} \Big)^2 \text{log} \frac{2 K |\mathcal{S}| |\mathcal{A}|}{\delta} \\
    & = \frac{16^2}{2(1-\gamma)^6 \varepsilon^2} \text{log} \frac{2 K |\mathcal{S}| |\mathcal{A}|}{\delta}
\end{align*}
and the corresponding number of calls to the sampling model, i.e. the sample complexity is (what we have shown above is actually a lower bound but can choose $K, H, M$ so that it is of the following order),
\begin{align*}
    |\mathcal{S}| \cdot |\mathcal{A}| \cdot K \cdot H \cdot M & = \Tilde{O}\Big(\frac{|\mathcal{S}| |\mathcal{A}|}{(1-\gamma)^8 \varepsilon^2} \Big),
\end{align*}
where the notation $\Tilde{O} ()$ hides poly-logarithmic factors. This completes the proof. $ \hfill\blacksquare $

\newpage

\section{MDP examples}

\subsection{MDP on which distribution-mismatch coefficient scales with size of state space}\label{sec:MDP_Xiao_Bad}

We construct an MDP on which 
\begin{align*}
    \theta_\rho = \frac{1}{1-\gamma} \Big\| \frac{d^\star_\rho}{\rho} \Big\|_\infty,
\end{align*} scales with $|\mathcal{S}|$, and hence so does the iteration complexity of the bound of \cite{Xiao2022OnMethods} for exact PMD.

Consider an MDP with state-space $\mathcal{S} = \{ s_1, s_2, ..., s_n\}$ of size n and arbitrary action space $\mathcal{A}$. $s_1$ is an absorbing state giving out rewards of 1 at each time-step, regardless of the action taken, i.e
\begin{align*}
    p(s_1 | s_1, a) = 1, \quad r(s_1, a) = 1 \quad \forall a \in \mathcal{A}.
\end{align*} 

All others states have an action, say $a_1$, that gives out a reward of 1 and with probability $1-\delta$ brings the agent to state $s_1$ for some $\delta > 0$ and spreads the remaining $\delta$ probability arbitrarily amongst the other states. The other actions have arbitrary rewards strictly less than 1 associated to them, and arbitrary transition probabilities that place 0 mass on state $s_1$, i.e 
\begin{align*}
    & p(s_1 | s, a_1) = 1-\delta, \quad r(s, a_1) = 1 \quad \forall s \neq s_1, \\
    & p(s_1 | s, a) = 0, \quad r(s, a) < 1 \quad \forall s \neq s_1, \forall a \neq a_1.
\end{align*} 

Denote $r_{\max} = \max_{s \neq s_1, a \neq a_1} r(s,a) < 1$. The following condition ensures that $a_1$ is the optimal action in all states,
\begin{align*}
    \delta \leq \frac{1-\gamma}{\gamma} (1-r_{\max})
\end{align*}
so that $\pi^\star(s) = a_1$ for all states $s$. To see this, consider $s_i \neq s_1$, $a_m \neq a_1$ and an arbitrary policy $\pi$,
\begin{align*}
    Q^\pi(s_i, a_1) & = 1 + \gamma \Big( \frac{1-\delta}{1-\gamma} + \sum_{j=2}^n p(s_j|s_i,a_1) V^\pi(s_j) \Big) \\
    & \geq 1 + \gamma \frac{1-\delta}{1-\gamma} \\
    Q^\pi(s_i, a_m) & = r(s_i, a_m) + \gamma \sum_{j=2}^n p(s_j|s_i,a_1) V^\pi(s_j) \\
    & \leq r_{\max} + \gamma \frac{1}{1-\gamma}
\end{align*}
and solving
\begin{align*}
    r_{\max} + \gamma \frac{1}{1-\gamma} \leq 1 + \gamma \frac{1-\delta}{1-\gamma}
\end{align*}
will yield the condition above.

Then for $t \geq 1$ (abusing notation, $s_t$ denotes the state at time t),
\begin{align*}
    \mathbb{P}^{\pi^\star}(s_t = s_1 | s_0 = s) & = \sum_{s'} \mathbb{P}^{\pi^\star}(s_t = s_1, s_{t-1} = s' | s_0 = s) \\
    & = \sum_{s'} p(s_1|s',a_1) \mathbb{P}^{\pi^\star}(s_{t-1} = s' | s_0 = s) \\
    & \geq \sum_{s'} (1-\delta) \mathbb{P}^{\pi^\star}(s_{t-1} = s' | s_0 = s) \\
    & = 1-\delta
\end{align*}
and
\begin{align*}
    d^\star_\rho(s_1) & = (1-\gamma) \sum_s \rho(s) \sum_{t=0}^\infty \gamma^t \mathbb{P}^{\pi^\star}(s_t = s_1 | s_0 = s) \\
    & \geq (1-\gamma) \sum_s \rho(s) \sum_{t=1}^\infty \gamma^t (1-\delta) \\
    & \geq (1-\gamma) \sum_s \rho(s) \frac{\gamma}{1-\gamma} (1-\delta) \\
    & = \gamma(1-\delta).
\end{align*}
Now
\begin{align*}
    \Big\| \frac{d^\star_\rho}{\rho} \Big\|_\infty \geq \frac{d^\star_\rho(s_1)}{\rho(s_1)} \geq \frac{\gamma (1-\delta)}{\rho(s_1)}
\end{align*}
and depending on what $\rho$ you consider, $\theta_\rho$ can be arbitrarily large. In particular, the natural choice of the uniform starting-state distribution $\rho(s) = 1/n$ leads to 
\begin{align*}
    \theta_\rho \geq n \frac{\gamma(1-\delta)}{(1-\gamma)}
\end{align*}
which gives an iteration complexity under the result of \cite{Xiao2022OnMethods} for an  $\varepsilon$-optimal policy that is
\begin{align*}
    n \frac{\gamma(1-\delta)}{(1-\gamma)} \text{log} \frac{2}{(1-\gamma) \varepsilon}.
\end{align*}
Recall that $n = |\mathcal{S}|$, so this iteration complexity scales linearly with the size of the state space.

\subsection{Family of MDPs on which sub-optimality gaps can be made arbitrarily small}\label{sec:MDP_Khodad_Bad}

We present how to construct a family of MDPs on which $\Delta^k(s)$ defined in Section \ref{sec:MainResult} can be made arbitrarily small.

Consider an arbitrary MDP $\mathcal{M}$ with state space $\mathcal{S}$ and action space $\mathcal{A}$. For each state-action pair $(s,a) \in \mathcal{S} \times \mathcal{A}$, create a duplicate action $a'$ s.t the transitions from that action in that state are the same as for the original pair, i.e
\begin{align*}
    p(s'|s,a) = p(s'|s,a') \quad \forall s' \in \mathcal{S}
\end{align*}
and the reward is shifted down by $\delta > 0$ from the original reward, i.e
\begin{align*}
    r(s,a') = r(s,a) - \delta.
\end{align*}
This results in a new MDP $\mathcal{M}'$ with an augmented action space $\mathcal{A}'$, that is twice the size of the action space of the original MDP $\mathcal{M}$. In terms of action-value of an arbitrary policy $\pi$, this results in
\begin{align*}
    Q^\pi_{\mathcal{M}'}(s,a) - Q^\pi_{\mathcal{M}'}(s,a') = \delta,
\end{align*}
where the notation $Q^\pi_{\mathcal{M}'}$ refers to action-values in the MDP $\mathcal{M}'$. In terms of sub-optimality gaps, this gives
\begin{align*}
    \Delta^\pi(s) \leq \delta.
\end{align*}

Choosing $\delta$ small enough, we can make the step-size of \cite{Khodadadian2021OnAlgorithm} arbitrarily large, at least in early iterations. The step-size condition (\ref{eq:STEP_SIZE}) of Theorem \ref{Thm:MAIN_RESULT} will be less affected by this issue as it does not depend directly on $\Delta^k(s)$, and not at all in the first iteration. Beyond its generality to PMD, this illustrates the benefit of our result restricted to NPG over the result of \cite{Khodadadian2021OnAlgorithm}.

\end{document}